\DeclareMathAlphabet{\curly}{U}{rsfs}{m}{n}
\newtheorem{thm}{Theorem}
\newtheorem{cor}[thm]{Corollary}
\newtheorem{lem}{Lemma}[section]
\newtheorem{remark}{Remark}
\renewcommand{\pmod}[1]{\allowbreak\mkern7mu({\operator@font mod}\,\,#1)}
\newcommand{\dalign}[1]{\[\begin{aligned} #1 \end{aligned}\]}
\newcommand{\ZZ}{{\mathbb Z}}
\newcommand{\RR}{{\mathbb R}}
\newcommand{\bb}{{\mathbf b}}
\newcommand{\LL}{\curly L}
\newcommand{\BB}{\curly B}
\newcommand{\TT}{\curly T}
\newcommand{\PP}{\curly P}
\newcommand{\Vol}{\operatorname{Vol}}   
\newcommand{\g}{\ensuremath{\gamma}}
\newcommand{\lam}{\ensuremath{\lambda}}
\newcommand{\eps}{\ensuremath{\varepsilon}}
\newcommand{\bx}{{\ensuremath{\boldsymbol{\xi}}}}
\newcommand{\cE}{\mathcal{E}}
\newcommand{\cF}{\mathcal{F}}
\newcommand{\cA}{\mathcal{A}}
\newcommand{\cB}{\mathcal{B}}
\newcommand{\cR}{\mathcal{R}}
\newcommand{\flr}[1]{{\ensuremath{\left\lfloor {#1} \right\rfloor}}}
\newcommand{\pfrac}[2]{{\left(\frac{#1}{#2}\right)}}
\newcommand{\be}{\begin{equation}}
\newcommand{\ee}{\end{equation}}
\newcommand{\benn}{\begin{equation*}}   
\newcommand{\eenn}{\end{equation*}}
\renewcommand{\AA}{\curly A}
\renewcommand{\(}{\left(}
\renewcommand{\)}{\right)}
\renewcommand{\ge}{\geqslant}
\renewcommand{\le}{\leqslant}
\numberwithin{equation}{section}
\newcommand{\order}{\asymp}
\newcommand{\ssum}[1]{\sum_{\substack{#1}}}  
\newif\ifdraft
\begin{document}

\title{Rough integers with a divisor in a given interval}

\author{Kevin Ford}

\address{Department of Mathematics, 1409 West Green Street, University
of Illinois at Urbana-Champaign, Urbana, IL 61801, USA}
\email{ford126@illinois.edu}

\date{\today}
\thanks{2000 Mathematics Subject Classification: Primary 11N25;
  Secondary 62G30}
\thanks{Research supported by National Science Foundation grant
DMS-1802139}

\begin{abstract} We determine, up to multiplicative constants, the number of integers $n\le x$ that have no prime factor $\le w$
and a divisor in $(y,2y]$.  Our estimate is uniform in $x,y,w$.
We apply this to determine the order of the number of
distinct integers in the $N\times N$ multiplication table which are free of
prime factors $\le w$, and the number of
distinct fractions of the form $\frac{a_1a_2}{b_1b_2}$
with $1\le a_1 \le b_1\le N$ and $1\le a_2\le b_2 \le N$.  
\end{abstract}

\maketitle

%
%
%
\section{Introduction}\label{sec:intro}
%
%
%

In the paper \cite{F}, the author established the order of growth
of
$H(x,y,z)$, the number of integers $n\le x$ which have a divisor in
the interval $(y,z]$, for all $x,y,z$.  An important special case is
\be\label{Hxy2y}
H(x,y,2y) \asymp \frac{x}{(\log y)^{\cE} (\log_2 y)^{3/2}} \qquad
(3\le y\le \sqrt{x}),
\ee
where
\[
\cE = 1 - \frac{1+\log_2 2}{\log 2} = 0.086071332\ldots.
\]
A shorter, more direct proof of the 
order of magnitude bounds in the special case \eqref{Hxy2y}
is given in \cite{F2}.
More on the history of estimations of $H(x,y,z)$, further applications
and references may be found in \cite{F}.

A number of recent aplications have required similar bounds, but where the underlying set of integers $n$ is restricted to
a special set, e.g. the set of shifted primes (\cite[Theorem 6,7]{F}, \cite{KouP}) or
the values of a polynomial \cite{Erdos52,ES90,T90a,T90b,FKSY}.
More generally, we define 
\[
H(x,y,z;\cA) = | \{ n\le x, n\in \cA : d|n \text{ for some } d\in (y,z] \}|.
\]
Another natural set to consider is $\cR_w$, the set of integers
with no prime factor $p\le w$; called \emph{$w-$rough numbers} 
by some authors.
Here we determinte the exact order of growth of $H(x,y,2y;\cR_w)$
 for all $x,y,w$; the more general quantity  $H(x,y,z;\cR_w)$
 can be estimated by similar methods, although there are many cases depending on the relative size of the parameters $w,x,y,z$.

\begin{thm}\label{mainthm}
  Suppose that $4\le  y\le \sqrt{x}$, $4\le w\le y/8$ and write\footnote{The notation $\log_2 x$ stands for $\log\log x$.}
  $\delta = \frac{\log_2 w}{\log_2 y}$. 
  \begin{enumerate}
  \item[(i)] When $1-1/\log 4 \le \delta \le 1$ we have
  \[
  H(x,y,2y;\cR_w) - H(x/2,y,2y;\cR_w) \gg \frac{x}{\log^2 w} \gg H(x,y,2y;\cR_w).
  \]
\item[(ii)] When $0\le \delta < 1-1/\log 4$, we have  
  \[
H(x,y,2y;\cR_w) - H(x/2,y,2y;\cR_w) \gg
x \delta B(w,y) (\log y)^{-\cE + \frac{\log(1-\delta)}{\log 2}}
\gg H(x,y,2y;\cR_w),
\]
where 
\[
B(w,y) =  \min(1,(\log_2 y)^{-1/2}((1-\delta)\log 4 - 1)^{-1}) .
\]
  \end{enumerate}
\end{thm}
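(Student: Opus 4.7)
The plan is to adapt the techniques of \cite{F,F2}, which established \eqref{Hxy2y}, to the $w$-rough setting, treating the two regimes of $\delta$ separately. For the upper bound in regime (i), each $n$ counted by $H(x,y,2y;\cR_w)$ factors as $n=dm$ with $d \in \cR_w \cap (y,2y]$ and $m \in \cR_w \cap [1,x/d]$, and Mertens' theorem gives $\Phi(t,w):=|\cR_w\cap[1,t]| \asymp t/\log w$ for $t \ge w$. Combined with $|\cR_w \cap (y,2y]| \asymp y/\log w$, this yields
\[
H(x,y,2y;\cR_w) \le \sum_{d \in \cR_w \cap (y,2y]} \Phi(x/d,w) \ll \frac{x}{\log w}\sum_{d \in \cR_w \cap (y,2y]}\frac{1}{d} \ll \frac{x}{\log^2 w}.
\]
For the upper bound in regime (ii), the argument parallels \cite{F2}: parametrize any divisor $d \in (y,2y]$ of $n \in \cR_w$ by the sequence of dyadic log-log intervals $I_j := \{p \text{ prime}: \log\log p \in (j\log 2, (j+1)\log 2]\}$ containing its prime factors. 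Since $d \in \cR_w$, only indices $j \ge \delta\log_2 y/\log 2$ contribute, truncating the effective log-log range in every Mertens--Tenenbaum sum from $\log_2 y$ to $(1-\delta)\log_2 y$; by the entropy calculation of \cite{F2}, this produces exactly the exponent $-\cE + \log(1-\delta)/\log 2$ in place of $-\cE$, while $B(w,y)$ arises as the local-CLT normalization for a subset-sum supported on $(1-\delta)\log_2 y/\log 2$ scales. The moment/thresholding method of \cite{F2} applied to the divisor potential $P(n,y;\cR_w) := \#\{d \mid n : d \in \cR_w \cap (y,2y]\}$ then delivers the upper bound.

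For the lower bounds I would use a single construction covering both regimes: take $n = p_{j_0} p_{j_0+1}\cdots p_{J}\cdot m$ with $j_0 \asymp \delta\log_2 y/\log 2$, $J \asymp \log_2 y/\log 2$ (so $J - j_0 \asymp (1-\delta)\log_2 y/\log 2$), each $p_j$ a prime chosen from $I_j$, and $m \in \cR_w$ a free cofactor placing $n$ in $(x/2,x]$. A subset-sum analysis on $\log p_{j_0},\ldots,\log p_J$ shows that a positive proportion of subsets $S\subseteq \{j_0,\ldots,J\}$ satisfy $\prod_{j \in S} p_j \in (y,2y]$, furnishing $n$ with a divisor in the target window. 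In regime (ii) the hit probability factors into the entropy term $(\log y)^{-\cE + \log(1-\delta)/\log 2}$ (coming from the number and distribution of available prime choices) and the local-CLT normalization $(\log_2 y)^{-1/2}/((1-\delta)\log 4 - 1)$ that appears in $B(w,y)$. In regime (i), $J - j_0 \le \log_2 y/(\log 2 \cdot \log 4)$ is comparatively small, the local CLT degenerates, and a direct combinatorial count of hitting subsets combined with Mertens' theorem for $m$ delivers $\gg x/\log^2 w$ as the lower bound on $H(x,y,2y;\cR_w) - H(x/2,y,2y;\cR_w)$.

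The principal obstacle will be controlling the transition at $\delta = 1 - 1/\log 4$, where the variance of the random subset-sum $\sum_{j \in S}\log p_j$ degenerates relative to the window width $\log 2$: the denominator $((1-\delta)\log 4 - 1)$ in $B(w,y)$ vanishes, and the Gaussian/local-CLT analysis must be replaced by a direct combinatorial one. Keeping all implicit constants uniform in $\delta \in [0, 1-1/\log 4)$, and matching upper and lower bounds cleanly through this transition, will require careful moment bookkeeping beyond what the unrestricted setting of \cite{F2} demands.
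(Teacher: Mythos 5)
Your upper bound for regime (i) is correct and matches the paper's argument exactly. However, your lower bound construction has a concrete quantitative flaw, and your upper bound sketch for regime (ii) is too vague to evaluate against the actual technical obstacles.

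The central issue is the ``one prime per $\log\log$-dyadic interval'' construction. Writing $v = \lfloor(1-\delta)\log_2 y/\log 2\rfloor$ for the number of available intervals $I_{j_0},\ldots,I_J$, your construction produces integers $n'=p_{j_0}\cdots p_J$ with $\omega(n')\approx v$ prime factors. But the dominant contribution to $H(x,y,2y;\cR_w)$ comes from $n'$ with $\omega(n') \approx \min\bigl(\log_2 y/\log 2,\ 2(\log_2 y-\log_2 w)\bigr)$ prime factors, which exceeds $v$ by a factor $\min\bigl(1/(1-\delta),\ \log 4\bigr)>1$ --- so the paper's construction necessarily packs \emph{more than one prime} into most of the intervals $D_j$. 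Concretely, the count from your construction is of order $\frac{x}{\log y}\cdot(\log 2)^v\cdot\frac{2^v}{\log y} = \frac{x}{\log^2 y}(\log 4)^v$, which upon insertion of $v=(1-\delta)\log_2 y/\log 2$ gives the exponent $-2+(1-\delta)\log\log 4/\log 2 = -\cE - \delta(2-\cE)$ for $\log y$. Since $-\cE - \delta(2-\cE) < -\cE + \log(1-\delta)/\log 2$ for every $\delta>0$ (the gap is $\approx 0.471\,\delta$ for small $\delta$), your lower bound is short of the target by a power of $\log y$. In regime (i) the deficiency is even larger: $(\log 4)^v \ll (\log y)^{0.5(1-\delta)}\ll(\log y/\log w)^2$, so your construction cannot reach $x/\log^2 w$ either.

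Relatedly, the assertion that ``a positive proportion of subsets $S\subseteq\{j_0,\ldots,J\}$ satisfy $\prod_{j\in S}p_j\in(y,2y]$'' is not correct: the number of subsets with $\sum_{j\in S}\log p_j$ in an interval of length $\log 2$ is at most about $2^v\log 2/\log n'$, i.e.\ a proportion $\asymp 1/\log y$, not a positive proportion. The correct statement (and the one needed) is that a proportion $\asymp\delta$ of tuples $(p_{j_0},\ldots,p_J)$ with $\omega(n')\approx\log_2 y/\log 2$ admit \emph{some} divisor in $(y,2y]$; this probability is governed by the quantity $L(a)$, and establishing it requires the volume estimates for the polytopes $Y_k(s,v)$ imported from \cite{F}, together with the second-moment bound on $W^*(a)$ and the local-to-global Lemma~\ref{HL}. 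Your outline skips all of this machinery, and the ``entropy calculation'' you invoke for the upper bound exponent does not, as far as I can see, produce $-\cE+\log(1-\delta)/\log 2$ if one simply replaces $\log_2 y$ by $(1-\delta)\log_2 y$ throughout --- that substitution gives the wrong exponent $-\cE-\delta(2-\cE)$ computed above. The paper's upper bound argument is genuinely more intricate: it splits into the sub-cases $\delta\ge 1/10$ (handled by the crude bound $L(a)\le\min(\tau(a)\log 2,\log 2+\log a)$ and Poisson tail estimates) and $\delta<1/10$ (handled by reducing to the multivariate integrals $U_k(v,u)$ and Lemma~\ref{UUlem}), and it is precisely in the second sub-case that the change of behaviour at $k=v+u$, reflecting the $w$-roughness, is captured.

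Your final paragraph correctly identifies the transition at $\delta=1-1/\log 4$ as a delicate point, but the paper handles it with Lemma~\ref{Norton}, the standard two-sided Poisson tail estimate, after setting up the problem in terms of the counts $\frac{(v\log 4)^k}{k!}\cdot\frac{k-v+1}{k+1}$; a separate ``direct combinatorial'' replacement is not required so long as one has chosen the range of $k$ correctly.
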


\begin{remark}
Some special cases are worth noting.  From Theorem \ref{mainthm} we 
have
\[
x \delta B(w,y) (\log y)^{-\cE + \frac{1-\delta}{\log 2}} \order
\begin{cases} 
\frac{x \log_2 w}{(\log_2 y)^{3/2}}  (\log y)^{-\cE + \frac{\log(1-\delta)}{\log 2}} & (\delta\le 1-\tfrac{1}{\log 4}-\eps, \eps>0 \text{ fixed}) \\
\frac{x \log_2 w}{(\log y)^{\cE} (\log w)^{1/\log 2} (\log_2 y)^{3/2}} &
(\log_2 w \le \sqrt{\log_2 y}).
\end{cases}
\]
\end{remark}

\begin{remark}
When $y>\sqrt{x}$, one can obtain similar results by using the
duality $d|n \iff (n/d)|n$.  That is, if $x/2<n\le x$, then
$d|n$ with $y<d\le 2y$ is equivalent to $d'|n$ with $d' \order x/y$.
\end{remark}

\medskip

We illustrate the utility of Theorem \ref{mainthm} with two applications.  The first is related to
the well-know multiplication table problem of Erd\H os
\cite{Erdos55, Erdos60}, which
asks for estimates on the number, $M(N)$, of distinct
integers in an $N\times N$ multiplication table.
In \cite{F} the author proved, using \eqref{Hxy2y}, that
\be\label{MN}
M(N) \order \frac{N^2}{(\log N)^{\cE} (\log_2 N)^{3/2}}.
\ee
More generally, consider the restricted multiplication table
problem of bounding $M(N;\cA)$, the number of distinct entries in an $N\times N$ multiplication table that belong to 
the set $\cA$. For example, when $\lambda\ne 0$ is fixed and 
$\cA = \{ p+\lambda : p \text{ prime}\}$, the order of
$M(N;\cA)$ was determined in \cite[Theorem 6]{F} (upper bound)
and \cite{KouP} (lower bound).

 Observe that $M(N;\cR_w)=1$ when $w\ge N$.

\begin{cor}
Uniformly for $4\le w\le N/2$, we have
\[
M(N;\cR_w) \order \begin{cases}
\frac{N^2}{\log^2 w} & \text{ if } \log w \ge (\log N)^{1-1/\log 4} \\
N^2 \delta B(w,N) (\log N)^{-\cE+\frac{\log(1-\delta)}{\log 2}} &
\text{ if } \log w = (\log y)^{\delta}, \delta\le 1-\frac{1}{\log 4}.
\end{cases}
\] 
\end{cor}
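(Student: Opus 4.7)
The plan is to derive the corollary from Theorem \ref{mainthm} via a standard dyadic reduction relating the multiplication table problem to the divisor-interval counting function $H(\cdot,\cdot,\cdot;\cR_w)$.

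\textbf{Lower bound.} Consider integers $n\in(N^2/2,N^2]\cap\cR_w$ admitting a divisor $d\in(N/2,N]$; for any such $n$ the complementary divisor $n/d$ also lies in $(N/2,N]$, so $n=d\cdot(n/d)$ appears in the $N\times N$ multiplication table.  The number of such $n$ equals
\[
H(N^2,N/2,N;\cR_w)-H(N^2/2,N/2,N;\cR_w),
\]
which, by Theorem \ref{mainthm} applied with $x=N^2$ and $y=N/2$ (so that $\log\log(N/2)\asymp\log\log N$), is $\gg$ the expression claimed in the corollary in both regimes.  The narrow range $w\in(N/16,N/2]$, where the hypothesis $w\le y/8$ of the theorem fails, is handled directly: the products $n=pq$ of primes $p,q\in(w,N]$ are automatically $w$-rough and lie in the table, and by the prime number theorem they number $\gg N^2/\log^2 N\asymp N^2/\log^2 w$.

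\textbf{Upper bound.} Integers $n\le N$ contribute at most $N$, absorbed into the target bound.  For $n>N$ in the table, write $n=ab$ with $a\le b\le N$; then $a$ is a divisor of $n$ lying in $[n/N,\sqrt n]$, hence in some dyadic interval $(2^k,2^{k+1}]$ with $2^k\le N$ and $n\le 2^{k+1}N$.  Consequently
\[
M(N;\cR_w)\le N+\sum_{k:\,2^k\le N} H(2^{k+1}N, 2^k, 2^{k+1};\cR_w),
\]
and summands with $2^{k+1}\le w$ vanish, since a $w$-rough divisor $>1$ in $(2^k,2^{k+1}]$ would have a prime factor $\le w$.

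In case (i) of the corollary, the hypothesis $\log w\ge(\log N)^{1-1/\log 4}$ ensures $\log w\ge(\log y)^{1-1/\log 4}$ for every dyadic $y\le N$, so each surviving summand falls under case (i) of Theorem \ref{mainthm}, giving $H(2^{k+1}N,2^k,2^{k+1};\cR_w)\ll 2^{k+1}N/\log^2 w$.  The geometric sum over $k$ yields $\ll N^2/\log^2 w$, as desired.

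In case (ii), let $y_0$ be defined by $\log\log w/\log\log y_0=1-1/\log 4$, so that $\log y_0=(\log N)^{\delta/(1-1/\log 4)}$ with $\delta/(1-1/\log 4)<1$, hence $y_0=N^{o(1)}$.  Dyadic $y\le y_0$ fall under case (i) of Theorem \ref{mainthm}, contributing a total $\ll Ny_0/\log^2 w=N^{1+o(1)}$, which is negligible against the claimed bound $\gg N^{2-o(1)}$.  For $y_0<y=2^k\le N$, case (ii) of Theorem \ref{mainthm} gives
\[
H(2^{k+1}N,2^k,2^{k+1};\cR_w)\ll 2^{k+1}N\cdot\delta_y B(w,y)(\log y)^{-\cE+\log(1-\delta_y)/\log 2},
\]
with $\delta_y=\log\log w/\log\log y$.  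Across dyadic scales the polylogarithmic factor varies by only a bounded multiplicative amount while the linear prefactor $2^{k+1}N$ doubles, so the sum is dominated up to a constant by its largest term at $y\asymp N$, yielding $\ll N^2\delta B(w,N)(\log N)^{-\cE+\log(1-\delta)/\log 2}$.

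\textbf{Main obstacle.} The key technical step is the monotonicity check in case (ii): verifying that the polylogarithmic density $\delta_y B(w,y)(\log y)^{-\cE+\log(1-\delta_y)/\log 2}$ changes only by a bounded factor when $y$ is doubled, so that the geometric factor $y$ controls the dyadic sum.  This is a routine but slightly delicate calculation, demanding some care near the transition $y=y_0$ where one regime of the theorem gives way to the other.
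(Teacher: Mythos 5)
Your overall strategy is the same as the paper's: sandwich $M(N;\cR_w)$ between a single $H$-count (lower bound) and a dyadic sum of $H$-counts (upper bound), invoke Theorem \ref{mainthm}, and observe that the density factor $H(x,y,2y;\cR_w)/x$ varies by only a bounded factor across $O(1)$ dyadic scales so that the geometric sum is dominated by the $y\asymp N$ term. The upper bound and the handling of $w$ near $N$ by counting primes and products of two primes are both fine in spirit.

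There is, however, a genuine error in your lower bound. You claim that if $n\in(N^2/2,N^2]$ has a divisor $d\in(N/2,N]$, then the complementary divisor $n/d$ also lies in $(N/2,N]$, so $n$ is in the $N\times N$ table. This is false: from $n\le N^2$ and $d>N/2$ one only gets $n/d<2N$, not $n/d\le N$. A concrete counterexample: take a prime $p$ just above $N/2$ and a prime $q\in(N,\,N^2/p]$; then $n=pq\in(N^2/2,N^2]$, is $w$-rough for $w<p$, and has the divisor $p\in(N/2,N]$, yet $n$ has no representation $ab$ with $a,b\le N$ (its only nontrivial factorization is $p\cdot q$ with $q>N$). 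Thus the inequality $H(N^2,N/2,N;\cR_w)-H(N^2/2,N/2,N;\cR_w)\le M(N;\cR_w)$ is not established by your argument and is, in fact, not an inclusion. The repair is to scale down: count $n\le N^2/4$ with a divisor $d\in(N/4,N/2]$, for then $n/d<N$ automatically and both factors land in $[1,N]$; this is precisely the paper's lower bound $H(N^2/4,N/4,N/2;\cR_w)\le M(N;\cR_w)$, and the difference $H(N^2/4,N/4,N/2;\cR_w)-H(N^2/8,N/4,N/2;\cR_w)$ is then estimated from below by Theorem \ref{mainthm} with $x=N^2/4$, $y=N/4$.

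Two smaller points. First, in the dyadic upper bound there are $O(1)$ scales with $2^{k}\le 8w$ (where the hypothesis $w\le y/8$ fails) whose contribution needs a word, though it is easily absorbed. Second, your prime-product argument should cover the full range $\sqrt{N}<w\le N/2$ (as in the paper) rather than only $w>N/16$, since for $w$ between $\sqrt N$ and $N/16$ the hypothesis $w\le y/8$ of the theorem does hold, but one must check that the lower bound from the theorem matches the claimed $N^2/\log^2 w$ there; the paper sidesteps this by using the direct prime count for all $w>\sqrt N$.
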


\begin{proof}
If $\sqrt{N} < w \le N/2$, then $M(N;\cR_w)$ counts 
entries in the multiplication table which are 
 primes in $(w,N]$ or the product of two such primes. 
 The desired bounds follow.
If $4\le w\le \sqrt{N}$, we use the inequalities
\[
H\( \frac{N^2}{4}, \frac{N}{4}, \frac{N}{2}; \cR_w\) \le M(N;\cR_w) \le \sum_{k\ge 0} H\( \frac{N^2}{2^k}, \frac{N}{2^{k+1}},\frac{N}{2^k};\cR_w\).
\]
The proof is easy: consider $ab \in \cR_w, a\le N$ and $b\le N$.
If $\frac{N}{4} < a\le \frac{N}{2}$ and $ab\le \frac{N^2}{4}$,
 then $b\le N$ and this proves the lower bound.  The upper bound 
 comes from taking $\frac{N}{2^{k+1}} < a \le \frac{N}{2^k}$
 for some non-negative integer $k$.
 The desired bound for $M(N;\cR_w)$ now follow from Theorem \ref{mainthm}, since we have $H(x,y,2y;\cR_w) \order x f(y,w)$
 where $f(u,w) \order f(y,w)$ for $\log u \order \log y$.
\end{proof}

Next, we consider the ''Farey fraction multiplication table''.
Let $\cF_N$ of Farey fractions of order $N$, i.e., 
\[
\cF_N = \left\{ \frac{a}{b} : 1\le a\le b\le N, (a,b)=1  \right\}.
\]
In private conversation, Igor Shparlinski asked the author
 about the
size of the product set $\cF_N \cF_N$
(in general, for sets $\cA,\cB\in \ZZ$, $\cA \cB$ denotes the 
product set $\{ab: a\in \cA,b\in \cB\}$).

\begin{cor}\label{Farey}
We have
\[
M(N)^2 \ll |\cF_N \cF_N| \le M(N)^2.
\]
Consequently, by \eqref{MN}, we have
\[
 |\cF_N \cF_N|  \order \frac{N^2}{(\log N)^{\cE}(\log_2 N)^{3/2}}.
\]
\end{cor}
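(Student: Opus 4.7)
The plan is to prove the upper and lower bounds separately.

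For the upper bound $|\cF_N \cF_N| \le M(N)^2$, I would observe that every $r \in \cF_N \cF_N$ admits a representation $r = (a_1/b_1)(a_2/b_2)$ with $a_i/b_i \in \cF_N$, so that the unreduced numerator $a_1 a_2$ and denominator $b_1 b_2$ each lie in the set of $M(N)$ distinct entries of the $N\times N$ multiplication table. Selecting one representation per $r$, the assignment $r \mapsto (a_1 a_2, b_1 b_2)$ is injective (since $r = u/v$ is recovered from the pair), giving the bound.

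For the substantive lower bound $|\cF_N \cF_N| \gg M(N)^2$, let $\cA$ denote the set of $m \in (N^2/8, N^2/4]$ having a divisor in $(N/2, N]$; by \eqref{Hxy2y} we have $|\cA| \asymp M(N)$. For each $m \in \cA$, pick the smallest such divisor $b(m) \in (N/2, N]$ and set $a(m) = m/b(m) \in (N/8, N/2]$. Consider
\[
\cA_+ := \{(u,v) \in \cA^2 : \gcd(u, v) = 1,\ a(u) \le a(v),\ b(u) \le b(v)\}.
\]
For each $(u, v) \in \cA_+$, the factorization
\[
\frac{u}{v} = \frac{a(u)}{a(v)} \cdot \frac{b(u)}{b(v)}
\]
writes $u/v$ as a product of two Farey fractions in $\cF_N$: each factor has numerator $\le$ denominator $\le N$, and since $\gcd(a(u), a(v)) \mid \gcd(u, v) = 1$ and likewise for the $b$ pair, both factors are already in lowest terms. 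Because $\gcd(u, v) = 1$ also makes $u/v$ itself reduced, the assignment $(u, v) \mapsto u/v$ is an injection $\cA_+ \hookrightarrow \cF_N \cF_N$, so $|\cF_N \cF_N| \ge |\cA_+|$.

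The main obstacle is then proving $|\cA_+| \gg M(N)^2$. By a routine M\"obius sieve on $\gcd(u, v)$, a positive proportion of pairs in $\cA^2$ are coprime. The remaining condition---that the points $(a(u), b(u))$ and $(a(v), b(v))$ be comparable in the componentwise partial order on $(N/8, N/2] \times (N/2, N]$---would be established by showing the image of the injection $m \mapsto (a(m), b(m))$ is sufficiently spread in both coordinates. For each $b_0 \in (N/2, N]$ the fiber $\{m \in \cA : b(m) = b_0\}$ consists of the multiples of $b_0$ in a short interval (of length $\asymp N/b_0$) that have no smaller divisor in $(N/2, N]$, a set of average size $\asymp |\cA|/N$; restricting to pairs with $b(u)/b(v) \in (1, 3/2]$---which retains a positive proportion---the corresponding $a$-ranges substantially overlap, and a positive proportion of pairs satisfy $a(u) \le a(v)$. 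Assembling these estimates yields $|\cA_+| \gg M(N)^2$ and completes the proof.
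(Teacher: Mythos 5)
Your upper bound matches the paper's (it calls it ``trivial'' and your injection $r\mapsto(a_1a_2,b_1b_2)$ is the reason). The lower bound, however, takes a genuinely different route from the paper, and the two steps you flag as needing work are not routine; in fact they are exactly what the paper's use of $w$-rough numbers is designed to circumvent, and as written your argument does not close.

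First, the paper does not sample a pair $(u,v)$ from the multiplication table and then try to \emph{factor} each as $a\cdot b$; it parametrizes directly by $(a_1,a_2,b_1,b_2)$ with $a_1,a_2\in[N/4,N/2]\cap\cR_w$ and $b_1,b_2\in[N/2,N]\cap\cR_w$, so the Farey inequalities $a_i\le b_i$ hold automatically by the choice of dyadic blocks. In your setup both $u,v\in(N^2/8,N^2/4]$ and $a(m)b(m)=m$, so $a(u)\le a(v)$ and $b(u)\le b(v)$ together force $u\le v$; since $u/v\in(1/2,1)$, the condition becomes $u/v< b(u)/b(v)$, which is a delicate anti-correlated event (making one of $a(u)\le a(v)$, $b(u)\le b(v)$ true tends to make the other false). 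Showing this holds for a positive proportion requires a genuine equidistribution statement about the map $m\mapsto(a(m),b(m))$ that you have not supplied and that is not in the literature you cite.

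Second, and more seriously, the claim that ``by a routine M\"obius sieve a positive proportion of pairs in $\cA^2$ are coprime'' does not go through with the information available. The union bound $|\cA|^2-\sum_p A_p^2$ with $A_p=|\{u\in\cA:p\mid u\}|$ would need $A_p\le C|\cA|/p$ with $C^2\sum_p p^{-2}<1$, i.e.\ a constant $C$ quite close to $1$; but \eqref{Hxy2y} only controls $H(x,y,2y)$ up to unspecified multiplicative constants, so you cannot control, say, $A_2$ tightly enough (having a divisor in $(N/2,N]$ is correlated with parity). This is precisely why the paper restricts to $\cR_w$ for a large constant $w$: then only primes $p>w$ can divide both members of a pair, the first-order inclusion-exclusion error is $\ll M(N)^2/(w\log w)$ (as shown using \eqref{Hxy2y}), and taking $w$ large makes it negligible. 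This restriction is also where Theorem~\ref{mainthm} is actually used, via the dyadic lower bound $H(MN,M/2,M;\cR_w)-H(MN/2,M/2,M;\cR_w)$; your argument never invokes the main theorem at all and relies only on \eqref{Hxy2y}, which does not by itself give the dyadic count $|\cA|\gg M(N)$ you assert. So both the coprimality step and the ordering step are genuine gaps, and the paper's $\cR_w$ device is not an optional simplification but the mechanism that makes the lower bound provable.
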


\begin{proof}
The upper bound is trivial, and thus the real work is on the lower bound.  We achieve this by placing restrictions on the 
fractions, firstly by putting them in dyadic intervals and 
secondly by removing those elements divisible by small primes.
To this end, define
\[
\cA_N = \{n : N/2\le n\le N\}, \quad
\cA_N^{(w)} = \cA_N \cap \cR_w.
\]
Let $w$ be a large, fixed constant.
A simple inclusion-exclusion argument yields (here $p$ denotes a prime in the sums)
\dalign{
 |\cF_N \cF_N| &\ge \left| \left\{ \frac{a_1a_2}{b_1b_2} : 
 a_1,a_2 \in \cA_{N/2}^{(w)}; b_1,b_2\in \cA_N^{(w)}; (a_1a_2,b_1b_2)=1 
 \right\}\right| \\
 &\ge |\cA_{N/2}^{(w)} \cA_{N/2}^{(w)} | \cdot |\cA_{N}^{(w)} \cA_{N}^{(w)} |-
 \sum_{w<p\le N/2}  |\cA_{N/2}^{(w)} \cA_{N/2p}^{(w)} |\cdot |\cA_{N}^{(w)} \cA_{N/p}^{(w)} |\\
 &\ge  |\cA_{N/2}^{(w)} \cA_{N/2}^{(w)} | \cdot |\cA_{N}^{(w)} \cA_{N}^{(w)} |-
 \sum_{w<p\le N/2}  |\cA_{N/2} \cA_{N/2p} |\cdot |\cA_{N} \cA_{N/p} |.
}
It is clear that for $M\le N$ we have
\[
 |\cA_{N} \cA_{M} | \le H(MN,M/2,M)
\]
and we deduce from \eqref{Hxy2y} that
\dalign{
\sum_{w<p\le N/2}  |\cA_{N/2} \cA_{N/2p} |\cdot |\cA_{N} \cA_{N/p} | &\ll\sum_{p>w} \frac{N^4}{p^2(\log (N/p))^{2\cE}(\log_2 (N/p))^3}  \ll \frac{M(N)^2}{w\log w}.
}
We also have the lower bound
\[
 |\cA^{(w)}_{N} \cA^{(w)}_{M} | \ge H(MN,M/2,M;\cR_w)-H(MN/2,M/2,M;\cR_w).
\]
 It follows that
\begin{multline}\label{FNH}
|\cF_N \cF_N| \ge \Big( H(\tfrac{N^2}{4},\tfrac{N}{4},\tfrac{N}{2};\cR_w) -  H(\tfrac{N^2}{8},\tfrac{N}{4},\tfrac{N}{2};\cR_w) \Big)
\Big( H(N^2,\tfrac{N}{2},N;\cR_w) -   H(\tfrac{N^2}{2},\tfrac{N}{2},N;\cR_w)\Big) - \\
- O\pfrac{N^4}{(\log N)^{2\cE} (\log_2 N)^{3} (w\log w)}.
\end{multline}
Inserting Theorem \ref{mainthm} into the estimate \eqref{FNH},
and taking $w$ to be a sufficiently large constant, we obtain the lower bound
in Corollary \ref{Farey}.
\end{proof}

\medskip
\noindent
\subsection{Notation} 
Let $\tau(n)$ be the number of positive divisors of $n$, 
and $\tau(n;y,z)$ denotes the number of divisors of $n$ within 
the interval $(y,z]$.
Let $\omega(n)$ be the number of distinct prime
divisors of $n$.  Let
$P^+(n)$ be the largest prime factor of $n$ and let $P^-(n)$ be the 
smallest prime factor of $n$.  Adopt the notational conventions $P^+(1)=0$ and
$P^-(1)=\infty$.  Constants implied by $O$, $\ll$ and $\asymp$
are absolute. The notation $f \asymp g$ means $f\ll g$ and $g\ll f$.  The symbol $p$ will always denote a prime.
Lastly, $\log_2 x$ denotes $\log\log x$.

\subsection{Heuristics}\label{sec:heuristic}
Here we give a short heuristic argument to justify the
formulas in Theorem \ref{mainthm}.  This is similar to the heuristics givin in \cite{F,F2}.

Write $n=n'n''$, where $n'$ is composed only of primes in $(w,2y]$ and
$n''$ is composed only of primes $>2y$.
For simplicity, assume $n'$ is squarefree and $n'\le y^{100}$.
Assume for the moment that the set $D(n')=\{ \log d : d|n' \}$ is
approximately uniformly distributed in $[0,\log n']$.  If $n'$ has $k$ prime factors, then $\tau(n')=2^k$ and we thus expect that
$\tau(n',y,2y)\ge 1$ with probability about
\[
\min\( 1, \frac{2^k}{\log y} \).
\]
 This expression changes behavior at $k=k_0 := \flr{\frac{\log_2 y}{\log 2}}$.
The number of $n\le x$ with $n'\in \cR_w$ and $\omega(n')=k$ is of size
\[
\frac{x}{\log y} \frac{(\log_2 y-\log_2 w)^k}{k!},
\]
and we obtain a heuristic estimate for $H(x,y,2y;\cR_w)$ of order
\[
\frac{x}{\log^2 y}
\Bigg[ \sum_{k\le k_0} \frac{(2\log_2 y-2\log_2 w)^k}{k!}+
(\log y) \sum_{k\ge k_0} \frac{(\log_2 y-\log_2 w)^k}{k!} \Bigg].
\]
The first sum always dominates, since the
second sum is dominated by the first summand
($k_0$ is always much larger than $\log_2 y-\log_2 w$).
The behavior of the first sum over $k$ depends on the relative sizes of
$k_0$ and $2\log_2 y-2\log_2 w$.  If $k_0 > 2\log_2 y-2\log_2 w$, that is, $\log w \ge (\log y)^{1-1/\log 4}$, the 
first contains the ``peak'' and we obtain
\[
H(x,y,2y;\cR_w) \approx \frac{x}{\log^2 y} e^{2\log_2 y-2\log_2 w} =
\frac{x}{\log^2 w}.
\]
For smaller $w$, we are summing the left tail of the Poisson distribution and standard bounds (see e.g. Lemma \ref{Norton} below)  yield 
\[
H(x,y,2y;\cR_w) \approx x B(y,w) (\log y)^{-\cE + \frac{\log(1-\delta)}{\log 2}}.
\]
This latter expression is too large by a factor $1/\delta$, and
this  stems from the uniformity
 assumption about $D(n')$, which 
 turns out to be false for all but a proportion $\delta$ of
 these integers.  Fluctuations in the distribution of 
 the prime factors of $n'$ lead to clustering of the divisors;
 more details can be found in \cite{F,F2}.
 As in \cite{F,F2}, we really should be considering 
 those $n'$ which have nicely distributed divisors,
 and a useful measure of how nicely distributed the divisors are
 is the function
\[
L(a)=\text{meas} \LL(a), \qquad \LL(a)=\bigcup_{d|a} [-\log 2+\log d,\log d).
\]
Adjusting our heuristic, we see that the
probability that $\tau(n',y,2y)\ge 1$ should be about
 $L(n')/\log y$, which is $\gg 1/\log y$ on a set of $n'$
 of density $\delta$.

%
%
%

\section{Preliminaries}\label{sec:prelim}
\begin{lem}[{\cite[Lemma 3.1]{F2}}]\label{Lbounds}
We have
\begin{enumerate}
\item  $L(a) \le \min(\tau(a)\log 2, \log 2+\log a)$;
\item  If $(a,b)=1$, then $L(ab) \le \tau(b) L(a)$;
\item  If $p_1 < \cdots < p_k$, then
$$
L(p_1\cdots p_k) \le\min_{0 \le j\le k} 2^{k-j} (\log(p_1\cdots p_j)+\log 2).
$$
\end{enumerate}
\end{lem}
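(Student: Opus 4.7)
The plan is to derive all three parts directly from the definition $\LL(a)=\bigcup_{d|a}[\log d-\log 2,\log d)$, with part (iii) falling out as a routine consequence of (i) and (ii). So the real content lies in (i) and (ii), each of which amounts to a one-line observation about this union of intervals.

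For part (i), I would simply note that $\LL(a)$ is a union of $\tau(a)$ intervals of length $\log 2$, so subadditivity of Lebesgue measure gives $L(a)\le \tau(a)\log 2$. For the second bound, each interval $[\log d-\log 2,\log d)$ with $d\mid a$ lies inside $[-\log 2,\log a]$ because $1\le d\le a$; hence $L(a)\le \log 2+\log a$.

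For part (ii), the coprimality of $a$ and $b$ means that every divisor of $ab$ factors uniquely as $de$ with $d\mid a$ and $e\mid b$, so
\[
\LL(ab)=\bigcup_{e\mid b}\bigl(\LL(a)+\log e\bigr),
\]
a union of $\tau(b)$ translates of $\LL(a)$. Subadditivity of Lebesgue measure then gives $L(ab)\le \tau(b)L(a)$.

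For part (iii), fix $0\le j\le k$ and split $n=p_1\cdots p_k$ as $n_1 n_2$ with $n_1=p_1\cdots p_j$ and $n_2=p_{j+1}\cdots p_k$. Since $(n_1,n_2)=1$ and $\tau(n_2)=2^{k-j}$, applying (ii) gives $L(n)\le 2^{k-j} L(n_1)$, after which (i) bounds $L(n_1)\le \log 2+\log(p_1\cdots p_j)$. Taking the minimum over $j$ yields the claim. I do not anticipate any genuine obstacle: once the geometric picture of $\LL(a)$ as a union of $\tau(a)$ unit-length (base $e$, length $\log 2$) intervals is in hand, the lemma is pure bookkeeping.
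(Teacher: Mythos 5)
Your proof is correct, and since the paper simply cites this lemma from \cite{F2} without reproducing an argument, there is no in-text proof to compare against; your derivation is exactly the standard one that the reference supplies. Parts (i) and (ii) follow, as you say, from subadditivity of measure applied to $\LL(a)$ as a union of $\tau(a)$ intervals of length $\log 2$ (each contained in $[-\log 2,\log a]$), together with the translation identity $\LL(ab)=\bigcup_{e\mid b}(\LL(a)+\log e)$ for coprime $a,b$, and part (iii) is the right combination of (i) and (ii) applied to the coprime splitting $p_1\cdots p_j\cdot p_{j+1}\cdots p_k$, minimized over $j$ (with the $j=0$ case handled by the conventions $p_1\cdots p_0=1$ and $L(1)=\log 2$).
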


Let $\PP(a,b)$  be the set of all squarefree positive integers composed only of primes in $(a,b]$. 
We adopt the convention that $1\in \PP(a,b)$ for any $a,b$.

\begin{lem}\label{suma}
\begin{enumerate}
\item[(a)] For $t\ge w\ge 2$ and $k\ge 0$ we have
\[
\ssum{a\in \PP(w,t) \\ \omega(a)=k} \frac{1}{a} \le 
\frac{(\log_2 t-\log_2 w+O(1))^k}{k!}.
\]
\item[(b)] For $t\ge w\ge 2$ and $k\ge 1$ we have
\[
\ssum{a\in \PP(w,t) \\ \omega(a)=k} \frac{\log a}{a} \ll
\(1 + \log (t/w) \) 
\frac{(\log_2 t-\log_2 w+O(1))^{k-1}}{(k-1)!}.
\]
\item[(c)] For $2\le w\le s\le t$, we have
\[
\sum_{a\in \PP(w,t)} \frac{L(a)}{a} \ll \pfrac{\log t}{\log s}^2
\sum_{a\in \PP(w,s)} \frac{L(a)}{a}.
\]
\end{enumerate}
\end{lem}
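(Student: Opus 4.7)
The plan is to derive all three bounds by standard Mertens-type manipulations; parts (a) and (b) reduce to expanding the sum over ordered $k$-tuples of primes, while part (c) is a factorization argument relying on Lemma~\ref{Lbounds}(ii). The two forms of Mertens I would use are
\[
\sum_{w<p\le t}\frac{1}{p} = \log_2 t - \log_2 w + O(1), \qquad \sum_{w<p\le t}\frac{\log p}{p} = \log(t/w) + O(1).
\]

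For (a), I would write each squarefree $a$ with $\omega(a)=k$ in $k!$ ordered ways as $p_1\cdots p_k$ with the $p_i$ distinct primes in $(w,t]$, obtaining
\[
\ssum{a\in \PP(w,t) \\ \omega(a)=k}\frac{1}{a} \;=\; \frac{1}{k!}\ssum{p_1,\ldots,p_k\in (w,t] \\ \text{distinct}}\frac{1}{p_1\cdots p_k} \;\le\; \frac{1}{k!}\biggl(\sum_{w<p\le t}\frac{1}{p}\biggr)^{\!k},
\]
from which (a) is immediate. Part (b) uses the same expansion together with the identity $\log a = \log p_1+\cdots +\log p_k$ and the symmetry of the resulting sum in the $p_i$:
\[
\ssum{a\in \PP(w,t) \\ \omega(a)=k}\frac{\log a}{a} \;=\; \frac{1}{(k-1)!}\ssum{p_1,\ldots,p_k\in (w,t] \\ \text{distinct}}\frac{\log p_1}{p_1\cdots p_k} \;\le\; \frac{\log(t/w)+O(1)}{(k-1)!}\biggl(\sum_{w<p\le t}\frac{1}{p}\biggr)^{\!k-1},
\]
and a second application of Mertens finishes the bound.

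For (c), I would factor each $a\in \PP(w,t)$ uniquely as $a=a_1a_2$ with $a_1\in \PP(w,s)$ and $a_2\in \PP(s,t)$. Since the supports of $a_1$ and $a_2$ are disjoint primes, $(a_1,a_2)=1$, so Lemma~\ref{Lbounds}(ii) yields $L(a)\le \tau(a_2)L(a_1)$. Substituting this into the sum separates the variables:
\[
\sum_{a\in \PP(w,t)}\frac{L(a)}{a} \;\le\; \biggl(\sum_{a_1\in \PP(w,s)}\frac{L(a_1)}{a_1}\biggr)\biggl(\sum_{a_2\in \PP(s,t)}\frac{\tau(a_2)}{a_2}\biggr).
\]
The second factor is an Euler product over squarefree $a_2$, equal to $\prod_{s<p\le t}(1+2/p)\le \exp\bigl(2\sum_{s<p\le t}1/p\bigr)\ll (\log t/\log s)^2$ by Mertens, which is the claimed ratio. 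None of the three parts presents a serious obstacle; the only step requiring a moment's thought is recognizing in (c) that peeling off the primes above $s$ and invoking Lemma~\ref{Lbounds}(ii) at exactly this cutoff is what decouples the $L$-sum into a product, since a naive bound on $L(a)$ over all of $\PP(w,t)$ would not produce the factor $(\log t/\log s)^2$.
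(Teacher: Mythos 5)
Your proof is correct and follows essentially the same route as the paper: parts (a) and (b) via expanding into ordered prime tuples (or equivalently extracting $\sum_{p\mid a}\log p$) plus Mertens, and part (c) via the factorization $a=a_1a_2$ across the cutoff $s$ together with Lemma~\ref{Lbounds}(ii) and the Euler product $\prod_{s<p\le t}(1+2/p)$. No gaps.
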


\begin{proof}
Item (a) is immediate from
\[
\ssum{a\in \PP(w,t) \\ \omega(a)=k} \frac{1}{a} \le 
\frac{1}{k!} \Bigg(  \sum_{w<p\le t} \frac{1}{p} \Bigg)^k
\]
and Mertens' estimate.  For item (b), we have
\[
\ssum{a\in \PP(w,t) \\ \omega(a)=k} \frac{\log a}{a} =
\ssum{a\in \PP(w,t) \\ \omega(a)=k} \frac{1}{a} \sum_{p|a} \log p \le \sum_{w<p\le t} \frac{\log p}{p} \ssum{a\in \PP(w,t) \\ \omega(a)=k-1} \frac{1}{a}.
\]
The desired inequality follows from part (a) and Mertens' estimates.  For part (c), we factor each $a\in \PP(w,t)$
uniquely as $a=a_1a_2$ with $a_1\in \PP(w,s)$ and $a_2\in\PP(s,t)$.
Then, using Lemma \ref{Lbounds} (ii) we deduce that
\dalign{
\sum_{a\in \PP(w,t)} \frac{L(a)}{a} &\le \sum_{a_1\in \PP(w,s)} \frac{L(a_1)}{a_1} \sum_{a_2\in \PP(s,t)} \frac{\tau(a_2)}{a_2} \\
&= \prod_{s<p\le t} \(1+\frac{2}{p} \) \sum_{a_1\in \PP(w,s)} \frac{L(a_1)}{a_1}.
}
The desired inequality follows from Mertens' estimates.
\end{proof}

The following is a standard sieve bound, see e.g. \cite{HR}.
\begin{lem}\label{sieve}
(a)
 Uniformly for $x \ge 2z\ge 4$, we have
\[
|\{ x/2 <n\le x: P^-(n)>z \}| \gg \frac{x}{\log z}.
\]
Uniformly for $x\ge z\ge 2$ we have
\[
|\{ n\le x: P^-(n)>z \}| \ll \frac{x}{\log z}.
\]
\end{lem}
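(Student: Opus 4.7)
The plan is to handle the two bounds separately. For the upper bound I would invoke any standard upper-bound combinatorial sieve (Selberg, Brun, or Rosser--Iwaniec) applied to the integers $n\le x$ sifted by primes $p\le z$. This yields
\[
|\{n\le x:P^-(n)>z\}|\ll x\prod_{p\le z}(1-1/p),
\]
and by Mertens' theorem the product is $\ll 1/\log z$, giving the claim.

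For the lower bound on the dyadic interval I would split into two regimes based on the size of $z$ relative to $x^{1/2}$. When $z\le x^{1/2}$, the Fundamental Lemma of the sieve, applied to $\{n:x/2<n\le x\}$ with sifting primes up to $z$ and level of distribution $x/2$, gives
\[
|\{x/2<n\le x:P^-(n)>z\}|\ge c_0 \cdot \tfrac{x}{2}\prod_{p\le z}(1-1/p) - R,
\]
where the sifting variable $u=\log(x/2)/\log z$ is bounded below by an absolute constant, $c_0>0$, and $R$ is a controlled error; the main term is $\gg x/\log z$ by Mertens, dominating $R$. In the remaining range $x^{1/2}<z\le x/2$, any $n\in(x/2,x]$ with $P^-(n)>z$ must itself be prime, since $n\le x<z^2$. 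The count therefore equals $\pi(x)-\pi(x/2)\gg x/\log x$ by Chebyshev's inequality, and the constraint $z>x^{1/2}$ forces $\log z>\tfrac{1}{2}\log x$, so this is $\gg x/\log z$ as required.

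The main subtlety is ensuring that the two regimes for the lower bound mesh at $z\asymp x^{1/2}$: both estimates produce bounds of order $x/\log x$ there, which agrees with $x/\log z$ up to a bounded factor, so no gap opens between the sieve range and the prime-counting range. Apart from this bit of case analysis, everything else amounts to Mertens' estimates and black-box applications of standard sieve results from Halberstam--Richert.
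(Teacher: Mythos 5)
The paper gives no proof of this lemma; it is cited as a standard bound from Halberstam--Richert. Your sketch is the natural way to establish it, but the threshold you chose between the two regimes leaves a genuine gap. In the sieve regime you take $z\le x^{1/2}$ and claim the sifting variable $u=\log(x/2)/\log z$ is ``bounded below by an absolute constant $c_0>0$.'' True, but not the relevant fact: one needs $u$ bounded away from $2$. For $z$ close to $x^{1/2}$ you get $u=2-2\log 2/\log x<2$, where the Jurkat--Richert lower-bound function $f$ vanishes ($f(s)=0$ for $s\le 2$, $f(s)\to 0^+$ as $s\to 2^+$), and the Fundamental-Lemma formulations with error terms like $O(e^{-cu\log u})$ also fail to guarantee a positive main term until $u$ exceeds some absolute constant strictly larger than $2$. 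The prime-counting argument in your second regime only starts at $z\ge x^{1/2}$, so the range $z\in(x^{1/2-\epsilon},x^{1/2}]$ is not covered.

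The fix is easy and keeps your structure: move the cutoff to $z=x^{1/3}$ (or any $x^{c}$ with $0<c<1/2$). For $z\le x^{1/3}$ one has $u\ge 3-o(1)$, safely in the range where the Fundamental Lemma gives a positive constant. For $x^{1/3}<z\le x/2$ you do not even need the observation that $z>x^{1/2}$ forces $n$ to be prime: the count is $\ge\pi(x)-\pi(x/2)\gg x/\log x$ by Chebyshev, since every prime $p\in(x/2,x]$ satisfies $P^-(p)=p>x/2\ge z$; and $\log z>\tfrac13\log x$ turns $x/\log x$ into $\gg x/\log z$. Finally, for $x$ below any fixed bound, $x/\log z\asymp 1$ and Bertrand's postulate supplies a prime in $(x/2,x]$, so the inequality holds trivially. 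The upper bound half of your argument is fine as stated.
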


Finally, we quote standard bounds on the Poisson distribution,
see e.g. the results in Section 4 of \cite{Norton}.

\begin{lem}\label{Norton}
Uniformly for $h\le m\le x$, we have
\[
\sum_{h\le k\le m} \frac{x^k}{k!} \order 
\min\( \sqrt{x},\frac{x}{x-m}, m-h+1\) \frac{x^m}{m!}.
\]
\end{lem}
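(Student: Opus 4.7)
The plan is to factor out the largest term of the sum and reduce both bounds to an estimate on a common ratio. Writing $a_k = x^k/k!$, the ratio $a_{k+1}/a_k = x/(k+1)$ is at least $1$ for $k < m \le x$, so the sequence $(a_k)$ is non-decreasing on $\{h,h+1,\ldots,m\}$ and the maximum is $a_m$. I would then factor the sum as $a_m R$, where
\[
R = \sum_{j=0}^{m-h} r_j, \qquad r_j = \frac{a_{m-j}}{a_m} = \prod_{i=0}^{j-1}\frac{m-i}{x} \le 1.
\]

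For the upper bound $R \ll \min(\sqrt{x},\, x/(x-m),\, m-h+1)$, I would apply three separate estimates on $r_j$: (i) the trivial bound $r_j \le 1$ yields $R \le m-h+1$; (ii) since each factor is at most $m/x$, $r_j \le (m/x)^j$ gives $R \le x/(x-m)$ by geometric series (for $m<x$); (iii) using $1-t \le e^{-t}$ on each factor $(m-i)/x = 1-(x-m+i)/x$ gives
\[
r_j \le \exp\!\left(-\frac{j(x-m)}{x} - \frac{j(j-1)}{2x}\right) \le e^{-j(j-1)/(2x)},
\]
and summing this Gaussian-type tail shows $R \ll \sqrt{x}$.

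For the matching lower bound, set $M := \min(\sqrt{x},\, x/(x-m),\, m-h+1)$. If $M = O(1)$, the trivial bound $R \ge r_0 = 1$ already gives $R \gg M$. Otherwise, restrict the sum to $0 \le j \le J := \lfloor M/2 \rfloor$, which is legitimate since $J \le m-h$. The constraints $M \le x/(x-m)$ and $M \le \sqrt{x}$ force $(x-m+i)/x \le 2/M \le 1/2$ whenever $i \le J-1$, so the elementary inequality $\log(1-t) \ge -2t$ on $[0,1/2]$ yields
\[
\log r_j \ge -\frac{2j(x-m)}{x} - \frac{j(j-1)}{x} \ge -O(1) \qquad (0 \le j \le J).
\]
Hence $r_j \gg 1$ on this range, and $R \gg J \gg M$, as desired.

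The main technical point is the lower bound: one must verify that the effective width of $j$ values near the peak on which $r_j$ stays $\Omega(1)$ really equals the minimum of $\sqrt{x}$, $x/(x-m)$, and $m-h+1$, which requires balancing the linear error $j(x-m)/x$ against the quadratic error $j^2/x$ and then against the number $m-h$ of available summation indices. The upper bound, by contrast, is three independent and routine calculations.
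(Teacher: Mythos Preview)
The paper does not prove this lemma at all: it simply states ``Finally, we quote standard bounds on the Poisson distribution, see e.g.\ the results in Section~4 of \cite{Norton}'' and moves on. Your argument is therefore not being compared against a proof in the paper, but rather supplies one where the paper relies on a citation.

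Your proof is correct. Factoring out the top term $x^m/m!$ and studying the ratios $r_j=\prod_{i=0}^{j-1}(m-i)/x$ is the natural approach, and your three separate upper bounds (trivial, geometric, Gaussian-tail) cleanly give the three terms in the minimum. The lower bound is also sound: for $j\le J=\lfloor M/2\rfloor$ one has $(x-m+i)/x\le 1/M+M/(2x)\le 3/(2M)$ by the constraints $M\le x/(x-m)$ and $M\le\sqrt{x}$, so $\log(1-t)\ge -2t$ applies and each $r_j$ is bounded below by an absolute constant; summing $J+1\gg M$ such terms gives $R\gg M$. The edge cases ($m=x$, where $x/(x-m)=+\infty$ and the minimum is carried by the other two terms; and $M=O(1)$, handled by $r_0=1$) are properly accounted for. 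One tiny point worth stating explicitly is that ``$M$ not $O(1)$'' should be read as $M\ge 4$ (or any fixed constant $\ge 4$) so that both $2/M\le 1/2$ and $J\ge 1$ hold; this is implicit in your write-up but could be made explicit.
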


%
%
%
\section{Local-to-global estimates}

Following a kind of local-to-global principle
first utilized in \cite{F}, we bound $H(x,y,2y;\cR_w)$ in terms of the function $L(a)$.  This justifies the heuristic
presented in Section \ref{sec:heuristic}.

\begin{lem}\label{HL}
If $w\le y^{1/15}$ and  $y\le \sqrt{x}$, then
\[
H(x,y,2y;\cR_w)-H(x/2,y,2y;\cR_w) \gg \frac{x}{\log^2 y}\sum_{a\in \PP(w,y)} \frac{L(a)}{a}.
\]
If  $w \le y\le \sqrt{x}$ and $w \le y^{1/10}$, then 
\[
H(x,y,2y;\cR_w) \ll \frac{x}{\log^2 y} \sum_{a\in \PP(w,y)}  
\frac{L(a)}{a}.
\]
\end{lem}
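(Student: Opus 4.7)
The plan is to factor $n = a p m$ where $a \in \PP(w, y^{1/2}]$ is squarefree, $p$ is an auxiliary prime in $(y^{1/2}, y]$ coprime to $a$, and $m$ has $P^-(m) > y$. Lemma \ref{sieve} controls the $m$-factor in both directions, reducing each inequality to a sum over $(a,p)$. The key bridge to $L(a)$ is the equivalence: $ap$ has a divisor $d_1 p \in (y, 2y]$ with $d_1 \mid a$ iff $\log(y/p) \in \LL(a)$, so via the prime number theorem and the substitution $s = \log(y/p)$,
\[
\sum_{y^{1/2} < p \le y} \frac{\mathbf{1}[\log(y/p) \in \LL(a)]}{p} \asymp \frac{L(a)}{\log y},
\]
valid once $\LL(a) \subset [-\log 2, (\log y)/2]$, which follows from $a \le y^{1/2}$ by Lemma \ref{Lbounds}(i). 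Lemma \ref{suma}(c) with $s = y^{1/2}$, $t = y$ then replaces $\sum_{a \in \PP(w, y^{1/2})}$ by $\sum_{a \in \PP(w, y)}$ up to an absolute constant.

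For the lower bound, for each admissible pair $(a, p)$ and each $d_1 \mid a$ with $d_1 p \in (y, 2y]$, Lemma \ref{sieve}(a) supplies $\gg x/(ap \log y)$ integers $m \in (x/(2ap), x/(ap)]$ with $P^-(m) > y$. Requiring $p$ to exceed every prime of $a$ makes $p$ the unique prime factor of $n = apm$ in $(y^{1/2}, y]$, so each constructed $n$ is counted $O(1)$ times. Summing first over $p$ (yielding the factor $\asymp L(a)/\log y$) and then over $a$ produces the target $\gg \frac{x}{\log^2 y} \sum_{a \in \PP(w, y)} L(a)/a$.

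For the upper bound, I would classify each $n$ counted by $H(x,y,2y;\cR_w)$ by its witnessing divisor $d \in (y, 2y]$. Case (i): $d$ is a prime in $(y, 2y]$, so $n$ has such a prime factor, and the direct sieve estimate gives $\ll x/(\log y \log w)$, which the target absorbs since $\sum_{a \in \PP(w,y)} L(a)/a \gg \log y/\log w$. Case (ii), the main case: $d$ has a prime factor $p \in (y^{1/2}, y]$, so writing $d = d_a p$ reduces to the $(a,p)$-sum controlled by the key identity above. Case (iii): every prime factor of $d$ is $\le y^{1/2}$, so $d$ divides the $y^{1/2}$-smooth part of $n$; this is handled by iterating the auxiliary-prime extraction (using $p$ ranging over $(y^{1/4}, y^{1/2}]$, and smaller intervals if needed), collecting the partial $L$-sums via Lemma \ref{suma}(c) at each step. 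The main obstacle is controlling this last case uniformly; the hypothesis $w \le y^{1/10}$ (respectively $y^{1/15}$) is imposed precisely to provide enough headroom for the recursive extraction and to keep the sieve estimates effective.
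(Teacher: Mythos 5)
Your lower bound construction has a parameter incompatibility that becomes fatal near $y=\sqrt{x}$. You invoke Lemma~\ref{sieve}(a) with sieving level $z=y$ to count $m\in(x/(2ap),\,x/(ap)]$, which requires $x/(ap)\ge 2y$, i.e.\ $ap\le x/(2y)$. When $y=\sqrt{x}$ this forces $ap\le y/2$; but you also require a divisor $d_1\mid a$ with $d_1p\in(y,2y]$, hence $d_1p>y$, while $d_1p\le ap\le y/2$ --- a contradiction, so no admissible $(a,p,m)$ exists. The paper sidesteps this by using \emph{two} auxiliary primes $p_1,p_2$ with $a\le y^{1/5}<p_1<p_2\le\tfrac14 y^{4/5}$, sieving $b$ at level $\tfrac14 y^{4/5}$, and taking $\log(y/p_1p_2)\in\LL(a)$: the product $p_1p_2\asymp y$ places the divisor $dp_1p_2$ in $(y,2y]$, yet each $p_i$ is small enough that $x/(ap_1p_2)\ge\tfrac12 y^{4/5}=2\cdot\tfrac14 y^{4/5}$ holds for all $y\le\sqrt x$, so the sieve applies. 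The two-dimensional $(p_1,p_2)$ sum also yields $\gg(v-u)/\log y$ on each interval $[u,v)\subset\LL(a)$ robustly from Mertens alone, whereas your single-prime count over a multiplicatively short interval around $y^{4/5}$--$y$ needs a Chebyshev-type lower bound. Separately, you conflate the condition $a\in\PP(w,y^{1/2})$ (a constraint on prime factors) with a size bound $a\le y^{1/2}$; you do need a genuine size cutoff (the paper imposes $a\le y^{1/5}$ with $a\in\PP(w,y^{1/15})$) and then must transfer the truncated sum to $\sum_{a\in\PP(w,y)}L(a)/a$ --- a step your sketch omits.

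For the upper bound, your Case~(iii) --- witnessing divisors that are $y^{1/2}$-smooth, handled ``by iterating the auxiliary-prime extraction'' --- is precisely where the real work lies, and you concede it is unresolved. The paper's route is structurally different: it first reduces to squarefree $n$ via the squarefull-part decomposition, then writes $n=m_1m_2$ with $m_i\in(y_i,z_i]$, extracts $p=\min(P^+(m_1),P^+(m_2))$, and factors $n=abp$ with $P^+(a)<p<P^-(b)$, $b>p$. Sieving $b$ produces a denominator $\log\max(P^+(a),y_j/a)$, and the crux is the cited inequality \cite[Lemma 2.2]{DK}, which converts $\sum_a L(a)/(a\log^2(P^+(a)+t/(4a)))$ into $(\log t)^{-2}\sum_a L(a)/a$ --- a single averaging step that subsumes all sizes of the witnessing divisor without any recursion.
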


\begin{proof}
We begin with the lower bound.
We may assume without loss of generality that $y\ge y_0$,
where $y_0$ is a sufficiently large constant, because in the
case $y<y_0$, for any prime $p\in (y,2y]$ (such $p$ exists by Bertrand's Postulate) and we see that
\[
H(x,y,2y;\cR_w)-H(x/2,y,2y;\cR_w) \gg x/p \gg_{y_0} x.
\]

Consider integers $n=ap_1 p_2b \in (x/2,x]$ with $P^-(a)>w$,
 $p_1$ and $p_2$ prime, satisfying the inequalities
\[ a\le y^{1/5} < p_1 < p_2 \le \frac14 y^{4/5} < P^-(b), \]
and with $\log(y/p_1p_2)\in \LL(a)$.  The last condition implies
that $\tau(ap_1p_2,y,2y)\ge 1$,
and we also have that $P^-(n)>w$.   Since
$y^{4/5} \le y/a < p_1p_2 \le 2y$, we have
$x/ap_1p_2 \ge x/(2y^{6/5}) \ge \frac12 y^{4/5}$.  Thus, by Lemma \ref{sieve},
for each triple $(a,p_1,p_2)$, the number of possible $b$ is
$\gg \frac{x}{ap_1 p_2\log y}$.  Now $\LL(a)$
is the disjoint union of intervals of length $\ge \log 2$
contained in $[-\log 2,\log a]$.  For each
such interval $[u,v)$, Mertens' estimate implies that
\[
 \ssum{u\le \log(y/p_1p_2)< v \\ y^{1/5}<p_1<p_2<\frac14 y^{4/5} } \frac{1}{p_1p_2}
 \ge \sum_{8y^{1/5} <p_1 < y^{2/5}} \frac{1}{p_1} \sum_{ye^{-v}/p_1 < p_2 \le ye^{-u}/p_1} \frac{1}{p_2} 
 \gg \frac{v-u}{\log y}.
\]
Here we made use of the estimate $v\le \log a \le \frac15 \log y$
which implies that $ye^{-v}/p_1 \ge y^{2/5}>p_1$.
Thus, with $a$ fixed, the sum of $\frac{1}{p_1p_2}$ is $\gg \frac{L(a)}{\log y}$ and we obtain
$$
H(x,y,2y;\cR_w)-H(x/2,y,2y;\cR_w) \gg \frac{x}{\log^2 y} \ssum{a\le y^{1/5} \\ P^-(a)>w} \frac{L(a)}{a}.
$$
We to replace the sum over $a$ with an unbounded set which is muliplicatively more convenient, starting with
\[
\ssum{a\le y^{1/5} \\ P^-(a)>w}  \frac{L(a)}{a} \ge \ssum{a\le y^{1/5} \\ a\in \PP(w,y^{1/15})}
 \frac{L(a)}{a} \ge \sum_{a\in \PP(w,y^{1/15})}\frac{L(a)}{a} \(1 - \frac{\log a}{\log(y^{1/5})}\).
\]
Break this into two sums, the first being what we want and the second involving
\[
\sum_{a\in \PP(w,y^{1/15})} \frac{L(a)\log a}{a} = 
\sum_{a\in \PP(w,y^{1/15})} \frac{L(a)}{a} \sum_{p|a} \log p = \sum_{w<p\le y^{1/15}} \frac{\log p}{p} 
 \ssum{b\in \PP(w,y^{1/15})\\ p\nmid b} \frac{L(pb)}{b}.
\]
Using the trivial relation $L(pb)\le 2L(b)$
which comes from Lemma \ref{Lbounds} (ii), and Mertens' estimate, we have
\[
 \ssum{a\le y^{1/5} \\ P^-(a)>w} \frac{L(a)}{a} \ge 
 \sum_{a\in \PP(w,y^{1/15})} \frac{L(a)}{a} \( 1 - 
 \frac{2\log(y^{1/15})+O(1)}{\log(y^{1/5})} \) \gg  
 \sum_{a\in \PP(w,y^{1/15})} \frac{L(a)}{a}.
\]
An application of Lemma \ref{suma} (c)
 concludes the proof of the lower bound.
 
For the upper bound, we first relate $H(x,y,2y;\cR_w)$ to $H^*(x,y,2y;\cR_w)$, the number of
\emph{squarefree} integers $n\le x$ with $P^-(n)>w$ and  $\tau(n,y,z)\ge 1$.  Write
$n=n'n''$, where $n'$ is squarefree, $n''$ is squarefull and $(n',n'')=1$.
The number of $n\le x$ with $n''>\log^{10} y$ is
$$
\le x \sum_{n''>\log^{10} y} \frac{1}{n''} \ll \frac{x}{\log^5 y}.
$$
If $n'' \le \log^{10} y$, then for some $f|n''$, $n'$ has a divisor in
$(y/f,2y/f]$, hence
\be\label{HH*}
H(x,y,2y;\cR_w)\le \ssum{n''\le \log^{10} y \\ P^-(n)>w} \;\;\sum_{f|n''} H^*\( \tfrac{x}{n''},
\tfrac{y}{f}, \tfrac{2y}{f};\cR_w \) + O\pfrac{x}{\log^5 y}.
\ee
Let $w_0$ be a sufficiently large absolute constant.  It suffices 
to prove the upper bound for $w\ge w_0$, for the 
case $w<w_0$ follows from the case $w=w_0$.
In the sum, \[
y/f \le y\le (x/n'')^{1/2} \log^{5} y \le (x/n'')^{5/9}
\]
for large enough $w_0$.
We will show that for $w_0\le y_1\le x_1^{5/9}$,
\be\label{e2}
H^*(x_1,y_1,2y_1;\cR_w)\ll x_1 \max_{t\ge y_1^{3/4}} \frac{1}{\log^2 t}
\sum_{a\in \PP(w,t)} \frac{L(a)}{a}.
\ee

It follows from \eqref{e2} and \eqref{HH*} that
\dalign{
H(x,y,2y;\cR_w) &\ll  \ssum{n''\le \log^{10} y \\ P^-(n)>w} \frac{x}{n''} \sum_{f|n''} 
\max_{t\ge (y/f)^{3/4}} \frac{1}{\log^2 t}
\sum_{a\in \PP(w,t)} \frac{L(a)}{a} \\
&\ll x \max_{t\ge y^{2/3}} \frac{1}{\log^2 t} \sum_{a\in \PP(w,t)} \frac{L(a)}{a} 
 \ssum{n''\le \log^{10} y \\ P^-(n)>w} \frac{\tau(n'')}{n''}. 
}
The lemma follows by noting that
the inner sum over squarefull $n''$ is $O(1)$, using the relative
estimate in Lemma \ref{suma} (c) with $s=y^{2/3}$,
and finally noting that $\PP(w,y^{2/3}) \subseteq \PP(w,y)$.

It remains to prove \eqref{e2}.  The right side is $\gg x_1/\log^2 y_1$ since $L(1)=\log 2$,
and hence it suffices to count those $n\in (x_1/\log^2 y_1,x_1]$.  We'll count
separately those $n\in (x_1/2^{r+1},x_1/2^r]$ for some integer $r$, $0\le r\le 5\log_2 y_1$.
Let $\AA$ be the set of squarefree integers $n\in (x_1/2^{r+1},x_1/2^r]$ 
with a divisor in $(y_1,2y_1]$.
Put $z_1=2y_1$, $y_2=\frac{x_1}{2^{r+2}y_1}$, $z_2=\frac{x_1}{2^r y_1}$.
If $n\in \AA$, then $n=m_1m_2$ with $y_i < m_i \le z_i$ ($i=1,2$).
For some $j\in \{1,2\}$ we have $p=P^+(m_j) < P^+(m_{3-j})$;
in particular, $p$ is not the largest prime factor of $n$.  Fixing $j$, we may
write $n=abp$, where $P^+(a) < p < P^-(b)$ and $b>p$.  
Since $\tau(ap,y_j,z_j)\ge 1$, we have $y_j/a \le p\le z_j$.
By Lemma \ref{sieve} and the fact that $b>p$,
given $a$ and $p$, the number of choices for $b$ is
\[
\ll \frac{x_1}{2^r ap\log p} \le \frac{x_1}{2^r ap\log \max\(P^+(a),y_j/a\)},
\]
Now $a$ has a divisor in $(y_j/p,z_j/p]$,
and thus $\log(y_j/p)\in \LL(a)$ or $\log(2y_j/p)\in \LL(a)$.  Since
$\LL(a)$ is the disjoint union of intervals of length $\ge \log 2$
with total measure $L(a)$, by repeated use of Mertens' estimate we obtain
\[
\sum_{\substack{\log (cy_j/p)\in \LL(a) \\ p\ge P^+(a)}} \frac{1}{p} \ll
\frac{L(a)}{\log \max\(P^+(a),y_j/a\)} \qquad (c=1,2).
\]
Since $y_j \ge y_1^{4/9}/2^{r+2} \ge y_1^{3/4}$, we have that
\[
H^*(x,y,2y;\cR_w) \ll \sum_{0\le r\le 5\log_2 y_1} \frac{x_1}{2^r} \sum_{t\in \{4y_1,4y_2\}}
\sum_{a\in \PP(w,t)}  \frac{L(a)}{a \log^2\(P^+(a)+t/(4a)\)}. 
\]
We have $4y_j \ge y_1^{4/5}/2^{r} \ge y_1^{3/4}$ for any $j$ and any $r$.
Also, by \cite[Lemma 2.2]{DK},
\[
\sum_{a\in \PP(w,t)} \frac{L(a)}{a \log^2\(t/(4a)+P^+(a)\)} \ll \frac{1}{\log^2 t}
\sum_{a\in \PP(w,t)} \frac{L(a)}{a}.
\]
Summing over $r$, we deduce \eqref{e2}.
 \end{proof}

%
%
%
%

\section{Proof of theorem \ref{mainthm}: lower bounds}\label{sec:lower}

We first deal with simple cases. 
Let $w_0$ be a sufficiently large constant and $\eps>0$
a sufficiently small constant.
Firstly, if $y\le w_0$, then
Bertrand's postulate implies that there is a prime $p\in(y,2y]$ and therefore
\[
H(x,y,2y;\cR_w)-H_z(x/2,y,2y;\cR_w) \ge \# \{ x/2 <n\le x: p|n \} \gg x.
\]
Also, if $w\le w_0 < y$ and $w\le y/8$, then
\[
H(x,y,2y;\cR_w)-H(x/2,y,2y;\cR_w) \ge H(x,y,2y;\cR_{w_0})-H(x/2,y,2y;\cR_{w_0})
\]
and the desired bound follows from the case $w=w_0$.
Thirdly, when $y > w_0$ and $y^{\eps} < w\le y/8$,
we consider two caess: (a) $y\le \sqrt{x/8}$ and (b) $\sqrt{x/8} < y\le \sqrt{x}$.  In case (a), consider $n=pm$ where $y<p\le 2y < P^-(m)$.  Since $x/p \ge 4y$ for all such $p$, 
Lemma \ref{sieve} implies
\dalign{
H(x,y,2y;\cR_w)-H(x/2,y,2y;\cR_w)  &\ge  \sum_{y<p\le 2y} \# \{ x/2p <n\le x/p: P^-(n)> 2y\} \\
&\gg \sum_{y<p\le 2y} \frac{x}{p\log y} \gg \frac{x}{\log^2 w}.
}
In the case  (b) $\sqrt{x/8} < y\le \sqrt{x}$, consider $n=pm$
where $y<p\le 2y$ and $P^-(m)>y/8$.  Such $n$ have at most three
prime factors larger than $y$, hence at most three representations in this form.  Since $x/p \ge 2y/8$, Lemma \ref{sieve} similarly implies that
\dalign{
H(x,y,2y;\cR_w)-H(x/2,y,2y;\cR_w)  &\ge  \frac13 \sum_{y<p\le 2y} \# \{ x/2p <n\le x/p: P^-(n)> y/8\} \\
&\gg \sum_{y<p\le 2y} \frac{x}{p\log y} \gg \frac{x}{\log^2 w}.
}

From now  on, we assume 
\be\label{yw-lrw}
w_0 < w\le y^{\eps}.
\ee

We begin with the
local-to-global estimate for $H(x,y,2y;\cR_w)$  given in 
Lemma \ref{HL}, and relate $L(a)$ to counts of \emph{pairs}
of divisors which are close together.
Evidently,
\be\label{LW}
L(a) \ge (\log 2) \# \{ d|a : \tau(a,d,2d)=0)\}
\ge (\log 2) (\tau(a) - W^*(a)),
\ee
where
\[
W^*(a) = \# \{ d|a,d'|a :  d < d' \le 2d \}.
\]

We will apply \eqref{LW} with integers whose prime
factors are localized.  As in \cite{F2},
partition the primes into  sets $D_1, D_2, \ldots$, where
each $D_j$ consists of the primes in an interval
$(\lam_{j-1},\lam_j]$, with $\lam_j \approx \lam_{j-1}^2$.  More precisely, 
let $\lam_0=1.9$ and define inductively $\lam_j$ for $j\ge 1$ as the
largest prime so that
\be\label{Dj} 
\sum_{\lam_{j-1} < p \le \lam_j} \frac{1}{p} \le \log 2.
\ee
For example, $\lam_1=2$ and $\lam_2=7$. 
By Mertens' bounds, we have
$$
\log_2 \lam_{j} -\log_2 \lam_{j-1} = \log 2 + O(1/\log \lam_{j-1}), 
$$
and it follows that for some absolute constant $K$,
\be\label{lam}
2^{j-K} \le \log \lam_j \le  2^{j+K} \qquad (j\ge 0).
\ee
For a vector $\bb=(b_1,\ldots,b_J)$ of non-negative integers,
let $\AA(\bb)$ be the set of square-free integers $a$ composed of exactly 
$b_j$ prime factors from $D_j$ for each $j$.

%
%

\begin{lem}\label{sumW}
Assume $\bb=(b_1,\ldots,b_{J_2})$, with $b_j=0$ for
$j<J_1$.  Then
$$
\sum_{a\in \AA(\bb)} \frac{W^*(a)}{a} \ll \frac{(\log 4)^{b_{J_1}+\cdots+b_{J_2}}}{b_{J_1}! 
  \cdots b_{J_2}!}  \sum_{j=J_1}^{J_2} 2^{-j+b_{J_1} + \cdots + b_j}.
$$
\end{lem}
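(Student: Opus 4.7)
The plan is to unfold $W^*(a)$ via the $(e,f,g)$ parametrization used in \cite{F,F2}. Each pair of divisors $d<d'\le 2d$ of a squarefree $a$ factors uniquely as $d=ge$, $d'=gf$, where $g=\gcd(d,d')$, $\gcd(e,f)=1$, $efg\mid a$, and $e<f\le 2e$, yielding
\[
W^*(a)=\sum_{\substack{ef\mid a,\ \gcd(e,f)=1\\ e<f\le 2e}}\tau(a/ef).
\]
Interchanging the sums over $a$ and $(e,f)$, and writing $a=ef\cdot g$ with $g$ squarefree and coprime to $ef$, the sum over $g$ with $b_j-c_j$ primes in each $D_j$ (where $\mathbf{c}=(c_j)$ records the number of primes of $ef$ in each $D_j$) is bounded by $\prod_j (\log 4)^{b_j-c_j}/(b_j-c_j)!$ via Mertens' estimates applied to each $D_j$.

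It then suffices to bound, for each profile $\mathbf{c}$ with $c_j\le b_j$, $c_j=0$ for $j\notin[J_1,J_2]$, and $J^*=\max\{j:c_j\ge 1\}$,
\[
T_{\mathbf{c}}:=\sum_{\substack{\gcd(e,f)=1,\ ef\in\AA(\mathbf{c})\\ e<f\le 2e}}\frac{1}{ef}\ll \frac{(\log 4)^{N-1}}{\prod_j c_j!}\cdot 2^{-J^*},\qquad N=\sum_j c_j,
\]
which is the crux of the argument.  To prove it, parametrize $(e,f)$ by an ordered tuple of the $N$ distinct primes of $ef$ (with $c_j$ in $D_j$) together with a sign vector $\eps\in\{\pm 1\}^N$ deciding whether each prime goes into $e$ or $f$.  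The condition $e<f\le 2e$ becomes the linear constraint $S_\eps:=\sum_p \eps_p\log p\in(0,\log 2]$.

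Isolate one slot $p^*\in D_{J^*}$ with sign $\eps^*$; this is possible since $c_{J^*}\ge 1$.  With all other primes and signs fixed, the constraint $S_\eps\in(0,\log 2]$ forces $\log p^*$ to lie in an interval $I^*$ of length $\log 2$.  Since $p^*\in D_{J^*}$ and $\log\lam_{J^*-1}\asymp 2^{J^*}$ by \eqref{lam}, a short-interval Mertens estimate gives
\[
\sum_{\substack{p^*\in D_{J^*}\\ \log p^*\in I^*}}\frac{1}{p^*}\ll \frac{1}{\log\lam_{J^*-1}}\ll 2^{-J^*}.
\]
Summing over the remaining $2^{N-1}$ sign choices and over the other primes (using $\sum_{p\in D_j}1/p\le\log 2$ from \eqref{Dj}), and dividing by $\prod_j c_j!$ to pass from ordered to unordered tuples, produces the claimed bound on $T_{\mathbf{c}}$.

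Combining the pieces,
\[
\sum_{a\in\AA(\bb)}\frac{W^*(a)}{a}\ll \frac{(\log 4)^{B-1}}{\prod_j b_j!}\sum_{J^*=J_1}^{J_2}2^{-J^*}\sum_{\substack{c_j\le b_j,\ c_{J^*}\ge 1\\ c_j=0\text{ for }j>J^*}}\prod_j\binom{b_j}{c_j},
\]
with $B=\sum_j b_j$.  The inner binomial sum telescopes to at most $2^{\sum_{j\le J^*}b_j}$, and the remaining absolute factor is absorbed into the $\ll$, yielding the lemma.  The main obstacle is securing the $2^{-J^*}$ saving in $T_{\mathbf{c}}$: the naive observation that fixing all signs but one uniquely determines the remaining sign gives only a factor-of-two saving, whereas Mertens applied to the short interval $I^*\cap(\log\lam_{J^*-1},\log\lam_{J^*}]$ extracts the full $2^{-J^*}$ by exploiting the size of the primes in $D_{J^*}$.
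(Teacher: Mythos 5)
Your argument is correct and reconstructs the proof that the paper defers to (Lemma~2.3 of \cite{F2}, with the diagonal $d=d'$ removed): the $(e,f,g)$ decomposition of divisor pairs, the block-profile bookkeeping over the $D_j$, the sign-vector parametrization, and the isolation of a prime in the top block $D_{J^*}$ to extract the $2^{-J^*}$ factor via a short-interval Mertens bound, followed by the telescoping binomial sum. The only cosmetic discrepancies --- a stray factor of $2$ from the isolated sign $\eps^*$ and the $(\log 4)^{B-1}$ in place of $(\log 4)^{B}$ --- are harmlessly absorbed into the implied constant, as you note.
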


\begin{proof}
Identical to the proof of Lemma 2.3 in \cite{F2}, except that
we remove the terms corresponding to $d=d'$.
\end{proof}

We will only consider those intervals $D_j \subseteq 
(w,y]$, that is, only $J_1 \le j\le J_2$, where
\[
J_1 :=  \min\{ j : \lam_{j-1}>w\}, \qquad
J_2 := \max\{ j : \lam_j \le y \}.
\]
By \eqref{lam}, we have
\be\label{J1J2-approx}
\bigg| J_1 - \frac{\log_2 w}{\log 2} \bigg| \le K+2,
\qquad \bigg| J_2 - \frac{\log_2 y}{\log 2} \bigg| \le K+1.
\ee
With $w_0$ sufficiently large, put
\be\label{Meps}
M = \frac{\log_2 w_0}{200}, \qquad \eps=2^{-200M-2K-4}.
\ee
In the sequel, all statements involving $M$ implicitly assume that
 $M$ be sufficiently large.
By \eqref{yw-lrw}, \eqref{J1J2-approx} and \eqref{Meps}, we have
\be\label{J1J2-lwr}
J_1 \ge 100M, \qquad J_2-J_1 \ge 100M.
\ee

Let $\BB_k$ be the set of vectors $(b_{J_1},\ldots,b_{J_2})$ which satisfy
\begin{enumerate}
\item[(a)] $b_{J_1} + \cdots + b_{J_2} = k$;
\item[(b)] $\sum_{j=J_1}^{J_2} 2^{-j + b_{J_1}+\cdots+b_j} \le 2^{-M}$;
\item[(c)] $b_{J_1+i-1} \le M+i^2 \;\; (i\ge 1)$;
\item[(d)] $b_{J_2-i+1} \le M+i^2 \;\; (i\ge 1)$.
\end{enumerate}

From the definition of $J_2$, whenever $\bb\in \BB_k$ 
and $a\in \AA(\bb)$, we have $a\in \PP(w,y)$.
By Lemma \ref{sumW}, for any $k$ and any $\bb\in \BB_k$ we have
\be\label{Wa} 
\sum_{a\in \AA(\bb)} \frac{W^*(a)}{a} \le \frac{1}{10} \frac{(\log 4)^k}{b_{J_1}!\cdots b_{J_2}!}.
\ee

 By \eqref{lam},  the fact that $J_1$ is sufficiently large,
  and $b_j \le (j+1-J_1)^2+M$, for any $k$ and $\bb\in \BB_k$ we have by \eqref{Dj}
\be\label{sumtau}
\begin{split}
\sum_{a\in\AA(\bb)} \frac{\tau(a)}{a} &= 2^k \prod_{j=J_1}^{J_2} 
\frac{1}{b_j!} \biggl(
  \sum_{p_1\in D_j} \frac{1}{p_1} \sum_{\substack{p_2\in D_j \\ p_2 \ne p_1}}
  \frac{1}{p_2} \cdots \sum_{\substack{p_{b_j}\in D_j \\ p_{b_j} \not\in
  \{ p_1, \ldots, p_{b_j-1} \} }} \frac{1}{p_{b_j}} \biggr) \\
&\ge 2^k \prod_{j=J_1}^{J_2} \frac{1}{b_j!}
   \biggl( \log 2 - \frac{b_j}{\lam_{j-1}} \biggr)^{b_j} 
\ge \frac{(\log 4)^k}{2 b_{J_1}! \cdots b_{J_2}!}.
\end{split}
\ee

Combining Lemma \ref{HL}, \eqref{LW}, \eqref{Wa}, and \eqref{sumtau}, we arrive at
\be\label{low1}
H(x,y,2y;\cR_w)-H(x/2,y,2y;\cR_w) \gg \frac{x}{\log^2 y} \sum_{k_1\le k\le k_2} \sum_{\bb\in \BB_k} \frac{(\log 4)^k}{b_{J_1}!\cdots b_{J_2}!}
\ee
for any $k_1\le k_2$.
We bound the sum on $\bb$ using techniques from \cite{F}.

Following our heuristic, take 
\be\label{k2}
k_2 = \flr{\min\(\frac{\log_2 y}{\log 2}, 2(\log_2 y-\log_2 w)\) - 2M}.
\ee
By \eqref{yw-lrw}, $k_2 \ge 100M$ and by
\eqref{J1J2-approx},
\[
k_2 = \min(J_2,(\log 4)(J_2-J_1))-2M+\theta, \quad 
|\theta| \le (\log 4)(2K+3).
\]
We will choose $k_1$ to satisy 
\be\label{k1}
10M \le k_1 \le k_2.
\ee
Also define
\be\label{vs}
v=J_2-J_1+1, \qquad \quad s=J_1-2-M.
\ee
Setting $g_i=b_{J_1+i-1}$ for $i\ge 1$, we have
\[
\sum_{i=1}^v 2^{-i+g_1+\cdots+g_i} = 2^{J_1-1} f(\bb) \le 2^{s+1}.
\]
By (c) and (d) in the definition of $\BB_k$, $g_i \le M+i^2$ and $g_{v+1-i}\le M+i^2$ for every $i\ge 1$.
Applying the argument on the top of page 419 in \cite{F}, it follows 
that for $k_1\le k\le k_2$ we have
\be\label{Yk-low}
 \sum_{\bb\in \BB_k} \frac{(\log 4)^k}{b_{J_1}!\cdots b_{J_2}!}
 \gg (v\log 4)^k \text{Vol} (Y_k(s,v)),
\ee
where $Y_k(s,v)$ is the set of $\bx=(\xi_1,\dots,\xi_k)\in \RR^k$ satisfying
\begin{enumerate} 
\item $0 \le \xi_1 \le \cdots \le \xi_k < 1$;
\item For $1\le i\le \sqrt{k-M}${\rm ,} $\xi_{M+i^2} > i/v$ and
$\xi_{k+1-(M+i^2)} < 1-i/v$;
\item $\sum_{j=1}^k 2^{j-v\xi_j} \le 2^s$.
\end{enumerate}

We now invoke a result from \cite{F} concerning the volume of
$Y_k(s,v)$.

\begin{lem}[{\cite[Lemma 4.9]{F}}]\label{Yk}
Uniformly for $v\ge 1${\rm ,} $10M \le k\le 100(v-1)${\rm ,} $s\ge M/2+1$ and 
$0\le k-v \le s-M/3-1$. 
Then
$$
\Vol(Y_k(s,v)) \gg \frac{k-v+1}{(k+1)!}.
$$
\end{lem}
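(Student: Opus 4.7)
The plan is to exhibit an explicit subset $Y'\subseteq Y_k(s,v)$ whose volume can be estimated directly from below. The ambient ordered simplex $\{0\le \xi_1\le\cdots\le \xi_k<1\}$ has volume $1/k!$, so the target bound amounts to retaining a fraction $(k-v+1)/(k+1)$ of this: I need to lose essentially one degree of freedom (the factor $1/(k+1)$) while keeping an interval of length of order $(k-v+1)/(k+1)$ for the freed coordinate.

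Heuristic and staircase. Under the uniform distribution on the ordered simplex, $\xi_j$ concentrates around $j/(k+1)$, so the exponent in condition (3) satisfies $j-v\xi_j\approx j(k+1-v)/(k+1)$, which is linear in $j$ and is maximized at $j=k$ with value $\approx k-v$. Hence the sum in (3) is essentially geometric with top term $\approx 2^{k-v}$, and the hypothesis $s\ge k-v+M/3+1$ leaves logarithmic slack. Guided by this, I would restrict to configurations $\xi_j\ge \phi_j$, where $\phi_j$ is a piecewise linear staircase chosen so that $\phi_{M+i^2}\ge i/v$ (enforcing condition (2) automatically, with the dual condition near index $k$ handled symmetrically) and so that $\sum_j 2^{j-v\phi_j}\le 2^{s-O(1)}$ forms a controlled geometric sum.

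To recover the factor $k-v+1$ rather than merely $1$, I would not freeze the staircase but let it translate by a parameter $t$ ranging over an interval of length $\asymp k-v+1$, and integrate over $t$. After the shift $\xi_j\mapsto \xi_j-\phi_j(t)$, the per-$t$ slice becomes an ordered-simplex-type region of volume $\ge 1/(k+1)!$, and the $t$-integration contributes the extra factor $k-v+1$. The main obstacle is calibrating the staircase slope and the translation range so that conditions (1)--(3) hold simultaneously across the full range of $t$. In particular, tightening condition (3) to its geometric-sum bound pins the staircase slope, while verifying condition (2) for small $i$ requires the buffer from $M$ being large in \eqref{Meps}, together with the lower bounds on $J_1$ and $J_2-J_1$ in \eqref{J1J2-lwr}. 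The delicate coupling is turning the $k-v$ bits of slack in (3) into $k-v$ genuine degrees of translational freedom for the top coordinates, which is where I expect the argument to demand the most care.
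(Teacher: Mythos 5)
Your proposal correctly identifies the numerology—$1/(k+1)!$ is a $1/(k+1)$ fraction of the simplex volume $1/k!$, and the numerator $k-v+1$ must come from the slack in condition (3)—but what you have written is a plan, not a proof, and the central mechanism is not carried out. You say so yourself: ``the delicate coupling \dots is where I expect the argument to demand the most care.'' That coupling \emph{is} the lemma.

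Beyond the explicit admission, two of the steps you do sketch would not survive being made precise. First, the dimensional bookkeeping in the ``translating staircase'' is inconsistent. For each fixed $t$, the set $\{\bx:\xi_j\ge\phi_j(t)\}$ inside the ordered simplex is a full $k$-dimensional region, and for a monotone family $\phi_j(t)$ these regions are nested in $t$; integrating their volumes over $t$ does not lower-bound $\Vol(Y_k)$, and declaring the per-$t$ slice to have ``volume $\ge 1/(k+1)!$'' is neither derived nor dimensionally coherent. If instead you trade $t$ for one coordinate (say $t=\xi_1$), the slices are $(k-1)$-dimensional and the factors come out differently from what you claim. Second, a staircase with slope $\asymp 1/v$ across all $k$ indices cannot sit inside the unit cube once $k>v$, because it would force $\xi_k\gtrsim k/v>1$; yet $k-v\ge 0$ is exactly the regime of interest, and the hypothesis $k\le 100(v-1)$ permits $k-v\asymp v$. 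The actual argument must exploit that condition (3) and the tube conditions (2) are binding only at sparse indices, leaving most coordinates free—a structural point your staircase ansatz ignores. The shape of the answer, a fraction $(k-v+1)/(k+1)$ of the simplex, is the fingerprint of a cycle-lemma (cyclic-rotation/ballot) argument on the $k+1$ gaps $\xi_1,\ \xi_2-\xi_1,\ \dots,\ 1-\xi_k$, showing that for each exchangeable gap vector at least $k-v+1$ of the $k+1$ cyclic rotations land in $Y_k(s,v)$; nothing in your proposal supplies that combinatorial step or an equivalent substitute.
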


Combining  \eqref{Meps}, \eqref{J1J2-lwr}, \eqref{k2}, \eqref{k1} and \eqref{vs}, we find
\dalign{
v &= J_2-J_1+1 \ge 100M, \\
10M &\le k_1 \le k_2 \le (\log 4)(J_2-J_1) = (\log 4)(v-1), \\
s &\ge \log_2 w - M \ge M/2+1, \\
k_2-v-s &\le (J_2-2M) - (J_2 - 1 - M) = 1-M \le -M/3-1.
}
Thus, the hypotheses of Lemma \ref{Yk} are satisfied
for all $k\in [k_1,k_2]$.
Therefore, gathering \eqref{low1}, \eqref{Yk-low} and invoking
Lemma \ref{Yk}, we conclude that
\be\label{H-lower-1}
H(x,y,2y;\cR_w)-H(x/2,y,2y;\cR_w)
\gg  \frac{x}{\log^2 y} 
\sum_{k_1 \le k\le k_2} \frac{(v\log 4)^k}{k!}\pfrac{k-v+1}{k+1}.
\ee

Consider three cases: 
I. $\delta \ge 1 - \frac{1}{\log 4}$, II. $\frac{1}{5} \le \delta < 1 - \frac{1}{\log 4}$, III. $0<\delta < \frac{1}{5}$.

\noindent
\textbf{Case I.} We have 
$\log_2 w \ge (1-\frac{1}{\log 4})\log_2 y$ and thus, by \eqref{k2} and \eqref{J1J2-approx},
\[
k_2 = \flr{2(\log_2 y-\log_2 w) - 2M} \ge v\log 4 - 2M-2K-4
\ge 1.1v.
\]
Now set $k_1 = 0.9 k_2$, so that \eqref{k1} is satisfied.
Then
\[
\frac{k-v+1}{k+1} \order 1 \qquad (k_1\le k\le k_2).
\]
Applying Lemma \ref{Norton} to the sum in
\eqref{H-lower-1}, we obtain
\[
H(x,y,2y;\cR_w)-H(x/2,y,2y;\cR_w) \gg\frac{x}{\log^2 y} 
e^{v\log 4} \gg \frac{x}{\log^2 w},
\]
as required in this case.

\medskip

\noindent
\textbf{Case II.}
By \eqref{k2}, followed by \eqref{J1J2-approx}, we have
\[
k_2 = \flr{ \frac{\log_2 y}{\log 2}- 2M}
\ge \frac{5}{4}(v-2K-4)-2M \ge 1.2v
\]
and take
\[
k_1 = \frac{9}{10} k_2.
\]
Thus,
\[
\frac{k-v+1}{k+1} \order 1 \order \delta \qquad
(k_1 \le k\le k_2).
\]
We apply Lemma \ref{Norton} with $h=k_1,m=k_2,x=v\log 4$,  and use
\dalign{
\min\(x^{1/2}, \frac{x}{x-m}, m-h+1\)&\gg \min\( (\log_2 y)^{1/2}, \frac{v\log 4}{v\log 4 - k_2} \) \\
&\gg \min\( (\log_2 y)^{1/2},  \frac{1}{1-\frac{1}{\log 4}-\delta}+O(1/\log_2 y) \)\\ &\gg \delta (\log_2 y)^{1/2} B(w,y).
}
Recalling the definition of $\cE$, we have by Stirling's formula,
\[
\frac{(v\log 4)^{k_2}}{k_2!} \gg \frac{(e(1-\delta))^{k_2}}{\sqrt{\log_2 y}} = \frac{(\log y)^{2-\cE + \frac{\log(1-\delta)}{\log 2}}}{\sqrt{\log_2 y}}.
\]
Invoking Lemma \ref{Norton} we see that the sum in \eqref{H-lower-1} is
\[
\gg \delta B(w,y) (\log y)^{2-\cE + \frac{\log(1-\delta)}{\log 2}},
\]
and
this gives the required lower bound in Theorem \ref{mainthm}.

\medskip

\noindent
\textbf{Case III.}  When $\delta < \frac{1}{10}$, we also have
\[
k_2 = \flr{ \frac{\log_2 y}{\log 2}- 2M} = J_2-2M+O(K),
\]
but in this case we take
\[
k_1 = k_2,
\]
as we are in the range where the summation in \eqref{H-lower-1}
is dominated by the final summand regardless of $k_1$.
Here
\[
k_2-v+1 \order J_1,\qquad \frac{k_2-v+1}{k_2+1} \order \frac{\log_2 w}{\log_2 y} = \delta.
\]
Applying Lemma \ref{Norton} to the sum in \eqref{H-lower-1},
we obtain
\[
H(x,y,2y;\cR_w)-H(x/2,y,2y;\cR_w)
\gg  \frac{\delta x}{\log^2 y} \, \frac{(v\log 4)^{k_2}}{k_2!}.
\]
Applying Stirling's formula as in Case II and observing that
$B(w,y)=1$ in this case, we conclude the desired upper bound.

This completes the proof of the lower bound in Theorem \ref{mainthm}.

%
%
%
%
\section{Proof of Theorem \ref{mainthm}: upper bounds}\label{sec:upper}
%
%

In this section, we prove the upper bound in Theorem \ref{mainthm}.
We begin with simple cases.  If $w_0$ is fixed and $w\le w_0$, then
$H(x,y,2y;\cR_w) \le H(x,y,2y)$ and the required bound follows from \eqref{Hxy2y}.  Next, if $\log_2 w \ge (1-1/\log 4)\log_2 y$, then by Lemma \ref{sieve},
\[
H(x,y,2y;\cR_w) \le \ssum{y<d\le 2y \\ P^-(d)>w}
| \{m\le x/d:  P^-(m)>w\}|  \ll
\ssum{y<d\le 2y \\ P^-(d)>w} \frac{x}{d\log w} \ll \frac{x}{\log^2 w},
\]
as required.

From now on, we assume that
\be\label{wy}
\log w \le  (\log y)^{1-1/\log 4},
\ee
that is, $\delta \le 1-\frac{1}{\log 4}$.
We apply Lemma \ref{HL} and use
upper bounds for $L(a)$ from Lemma \ref{Lbounds}.  As in \cite{F}, the sums
involving $L(a)$ are bounded in terms of multivariate integrals,
which were estimated accurately in \cite{F,F2}.

\subsection{Case I. $\frac{1}{10} \le \delta \le 1-\frac{1}{\log 4}$.}

This case is very easy, as we expect no clustering of divisors.  Let
\be\label{k0}
k_0 = \flr{\frac{\log_2 y}{\log 2}}.
\ee
Beginning with Lemma \ref{HL}, we apply Lemma \ref{Lbounds} (i) to
bound $L(a)$ and then apply  
Lemma \ref{suma} parts (a) and (b).  We have
\dalign{
H(x,y,2y;\cR_w) &\ll \frac{x}{\log^2 y} \Bigg[
\sum_{k\le k_0} 2^k \ssum{a\in \PP(w,y) \\ \omega(a)=k} \frac{1}{a}
+ \sum_{k> k_0} \ssum{a\in \PP(w,y) \\ \omega(a)=k} \frac{\log a}{a}
\Bigg] \\
&\ll   \frac{x}{\log^2 y} \Bigg[ \sum_{k\le k_0} \frac{(2\log_2 y-2\log_2 w)^k}{k!} + (\log y) \sum_{k\ge k_0} \frac{(\log_2 y-\log_2 w)^k}{k!} \Bigg].
}
Since $k_0 \ge 1.4 (\log_2 y-\log_2 w)$, the second sum 
on the right side is dominated by the single term $k=k_0$
and thus by Stirling's formula we get that
\[
\sum_{k\ge k_0} \frac{(\log_2 y-\log_2 w)^k}{k!} \ll 
\frac{(\log_2 y-\log_2 w)^{k_0}}{k_0!} \ll
\frac{((e\log 2)(1-\delta))^{k_0}}{(\log_2 y)^{1/2}} \ll
\frac{(\log y)^{1-\cE + \frac{\log(1-\delta)}{\log 2}}}{(\log_2 y)^{1/2}}.
\]
We have $k_0 \le 2\log_2 y-2\log_2 w$ in the first sum,
for which we invoke Lemma \ref{Norton} and obtain,
with $\alpha = \log_2 y-\log_2 w$ the bound
\dalign{
\sum_{k\le k_0} \frac{(2\log_2 y-2\log_2 w)^k}{k!} &\ll
\frac{(2\alpha)^{k_0}}{k_0!} 
\min\( \alpha^{1/2}, \frac{\alpha}{\alpha-k_0} \) \\
&\ll (2e (\log 2)(1-\delta))^{k_0} \min\(1, (\log_2 y)^{-1/2}
((1-\delta)\log 4-1)^{-1}\) \\
&\ll (\log y)^{-\cE + \frac{\log(1-\delta)}{\log 2}} 
B(w,y),
}
as required for Theorem \ref{mainthm}.

\medskip

\subsection{Case II. $\delta \le \frac{1}{10}$.}
This case is more delicate, because we expect that typically
there will be clustering of
the divisors of $a$, and we must bound the probability of 
non-clustering.
 
We cut up the sum in Lemma \ref{HL} according to $\omega(a)$.  Let
$$
T_k = \sum_{\substack{a\in \PP(w,y) \\ \omega(a)=k}} \frac{L(a)}{a}.
$$

%
%

We bound $T_k$ in terms of a mutivariate integral, in a manner similar to 
that in \cite{F2}.

\begin{lem}\label{Tnint}
Suppose $w$ is large, \eqref{wy} holds,
let 
\[
v=\flr{\frac{\log_2 y-\log_2 w}{\log 2}}, \quad
u=\flr{\frac{\log_2 w}{\log 2}}
\]
and assume that $1\le k\le 10v$.  Then
\[
T_k \ll (2\log_2 y-2\log_2 w)^k U_k(v,u), 
\]
where
\[
U_k(v,u)= \idotsint\limits_{0\le \xi_1 \le \cdots\le \xi_k\le 1} 
\min_{0\le j\le k} 2^{-j} (2^{v\xi_1+u}+\cdots+2^{v\xi_j+u}+1)\, d\bx.
\]
\end{lem}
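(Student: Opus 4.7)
My plan is to bound $L(a)$ via Lemma \ref{Lbounds}(iii), partition the prime factors of $a$ according to the dyadic-like levels $D_j$ introduced in Section \ref{sec:lower}, and then pass from the resulting discrete weighted sum over prime tuples to the continuous integral $U_k(v,u)$. This parallels the strategy of the proof of Lemma 3.2 in \cite{F2}.

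First I would enumerate each squarefree $a\in\PP(w,y)$ with $\omega(a)=k$ as $a=p_1\cdots p_k$ with $w<p_1<\cdots<p_k\le y$ and apply Lemma \ref{Lbounds}(iii) to get
\[
\frac{L(a)}{a}\le \frac{2^k}{p_1\cdots p_k}\min_{0\le j\le k}2^{-j}\bigl(\log(p_1\cdots p_j)+\log 2\bigr).
\]
For each $i$ let $n_i\in[J_1,J_2]$ be the unique index with $p_i\in D_{n_i}$, so $J_1\le n_1\le\cdots\le n_k\le J_2$ and by \eqref{lam}, $\log p_i\asymp 2^{n_i}$. Grouping by $(n_1,\ldots,n_k)$ with $b_j:=\#\{i:n_i=j\}$ and invoking \eqref{Dj},
\[
\ssum{p_i\in D_{n_i}\\p_1<\cdots<p_k}\frac{1}{p_1\cdots p_k}\le\prod_{j=J_1}^{J_2}\frac{(\log 2)^{b_j}}{b_j!},
\]
which gives
\[
T_k\ll 2^k\sum_{J_1\le n_1\le\cdots\le n_k\le J_2}\Bigl(\prod_{j}\frac{(\log 2)^{b_j}}{b_j!}\Bigr)\min_{0\le j\le k}2^{-j}\bigl(2^{n_1}+\cdots+2^{n_j}+1\bigr).
\]

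Second, I would remove the ordering constraint: each weakly increasing $(n_1,\ldots,n_k)$ with multiplicity profile $(b_j)$ corresponds to $k!/\prod_j b_j!$ unrestricted tuples in $[J_1,J_2]^k$, and since the bound depends only on the sorted $n_i$ values, the multinomial weights cancel and one obtains
\[
T_k\ll \frac{(2\log 2)^k}{k!}\sum_{(n_1,\ldots,n_k)\in [J_1,J_2]^k}\min_{0\le j\le k}2^{-j}\bigl(2^{n_{(1)}}+\cdots+2^{n_{(j)}}+1\bigr).
\]
Substituting $\xi_i=(n_i-J_1)/v$ converts this Riemann sum to $v^k$ times the corresponding integral over $[0,1]^k$; using $J_1=u+O(1)$ to replace $2^{n_i}$ by $2^{u+v\xi_i}$ (at the cost of an absolute constant) and symmetrizing the integrand to restrict to the simplex $\xi_1\le\cdots\le\xi_k$ (which supplies a factor $k!$ cancelling the $1/k!$) yields
\[
T_k\ll (2v\log 2)^k\,U_k(v,u)\asymp (2\log_2 y-2\log_2 w)^k\,U_k(v,u),
\]
since $2v\log 2=2\log_2 y-2\log_2 w+O(1)$.

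The main technical point is the Riemann-sum approximation in the last step: one must check that replacing the discrete sum over lattice points in $[J_1,J_2]^k$ by the continuous integral, together with the replacement $2^{n_i}\asymp 2^{u+v\xi_i}$, introduces only an absolute multiplicative constant and not one growing with $k$. This is fine because within any unit cube in $n$-space the integrand varies by only a bounded ratio ($2^{n_i}$ doubles when $n_i$ shifts by $1$, and $\min_j 2^{-j}(\cdots+1)$ is stable under a uniform bounded rescaling of the $2^{n_i}$), so the cube-by-cube comparison is safe. The hypothesis $1\le k\le 10v$ ensures that the integral is non-degenerate and that the sum is not dominated by boundary corrections where the simplex of sorted $\xi$'s is thin.
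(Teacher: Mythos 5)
Your argument is correct and follows essentially the same route as the paper: the paper simply cites the proof of \cite[Lemma 3.5]{F2} (which uses precisely the decomposition into the $D_j$'s, the bound from Lemma~\ref{Lbounds}(iii), and the change of variables to a multivariate integral), adjusting it to localize the primes to $j\ge J_1$ because $P^-(a)>w$; you have unpacked that same reduction in more self-contained detail. One small point worth making explicit: the passage from the lattice sum over $[J_1,J_2]^k$ to $v^k\,k!\,U_k(v,u)$ requires extending the domain of integration from $[0,v]^k$ to $[0,v+O(1)]^k$, and the resulting factor $(1+O(1/v))^k$ is bounded precisely because of the hypothesis $k\le 10v$, which is the real (and slightly more concrete) role of that hypothesis than the ``non-degeneracy'' you invoke at the end.
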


\begin{proof}
The proof is the same as the proof of Lemma 3.5 in \cite{F2}, except that we make use of the
fact that $P^-(a)>w$.  Recall the definition of 
the sets $D_j$ from Section \ref{sec:lower}.  By \eqref{lam}, any prime divisor of $a$ lies in $D_j$ 
with $ u -K-2 \le j\le v+u+K+3$. 
Following the proof of \cite[Lemma 3.5]{F2},
in particular using Lemma \ref{Lbounds} (iii), we have 
\dalign{
T_k \ll \frac{(2\log 2)^k}{k!} \int_{[u-K-2,v+u+K+4]^k} F(\mathbf{t})\,  d\mathbf{t},
}
where, letting $s_1\le s_2 \le \cdots \le s_k$  be the increasing rearrangement of
$t_1,\ldots,t_k$,
\[
F(\mathbf{t}) = \min_{0\le j\le k} 2^{-j} (2^{s_1}+\cdots+2^{s_j}+1).
\]
Observe that $F(\mathbf{t})$ is symmetric in $t_1,\ldots,t_k$.  Making the
change of variables 
\[
t_i=u-K-2+(v+2K+6)\xi_i \qquad (1\le i\le k)
\]
we see that 
$0\le \xi_i\le 1$ for each $i$.  Utilizing the summetry of $F(\mathbf{t})$, we find that
the multiple integral on the right side equals
\[
 (v+2K+6)^k k! \idotsint\limits_{0\le \xi_1 \le \cdots \le \xi_k \le 1} \min_{0\le j\le k} 
2^{-j} \(2^{(v+2K+6)\xi_i+u}+ \cdots + 2^{(v+2K+6)\xi_g+u} + 1\) d\boldsymbol{\xi}.
\]
We conclude that
\[
T_k(y) \ll ((2\log 2)(v+2K+6))^k U_k(v,u).
\]
Lastly, $(v+2K+6)^k \ll v^k$ since $k\le 10v$, and the
lemma follows.
\end{proof}

To bound $U_k(u,v)$ we invoke the following estimate from 
\cite{F,F2}.

\begin{lem}[{\cite[Lemma 13.2]{F}},{\cite[Lemma 4.4]{F2}}]\label{UUlem}
Define
\[
\TT(k,v,\g) = \{ \bx\in \RR^k : 0 \le \xi_1 \le \cdots \le \xi_k \le 1, 
  2^{v\xi_1} + \cdots + 2^{v\xi_j} \ge 2^{j-\g}\; (1\le j\le k) \}.
\]
Suppose $k,v,\gamma\in \ZZ$  with $1\le k\le 10v$ and $\g\ge 0$.  
Set $b=k-v$.  Then
$$
\Vol(\TT(k,v,\g)) \ll \frac{Y}{2^{2^{b-\g}} (k+1)!}, \qquad
Y = \begin{cases} b & \text{ if } b\ge \g+5 \\ (\g+5-b)^2(\g+1) & 
\text{ if } b < \g+5  \end{cases}.
$$
\end{lem}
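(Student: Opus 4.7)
The plan is to establish the volume estimate by a reparametrization followed by induction on $k$, with a case split on whether $b - \g$ is small ($b < \g + 5$) or large ($b \ge \g + 5$). First, I would make the substitution $y_j = 2^{v\xi_j}$, converting the domain to $\{1 \le y_1 \le \cdots \le y_k \le 2^v\}$ with partial-sum conditions $y_1 + \cdots + y_j \ge 2^{j-\g}$; equivalently, introducing the centered variables $\eta_j = \xi_j - (j-\g)/v$ translates the constraints into $\eta_j \gtrsim 0$ (since the left side is dominated by its largest term $2^{v\xi_j}$ up to a factor of $j$), while the monotonicity condition loosens to $\eta_{j+1} - \eta_j \ge -1/v$. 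This reparametrization ``straightens'' the typical behavior of $\xi_j$ along the diagonal $j/v$.

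For the case $b < \g + 5$, the upper bound $\xi_k \le 1$ is slack, and I would proceed by induction on $k$. Conditioning on the largest variable $\xi_k$, the remaining inner integral over $\xi_1, \ldots, \xi_{k-1}$ takes the form $\Vol(\TT(k-1, v, \g'))$ with a suitably adjusted $\g'$. Carrying out the resulting convolution over $\xi_k$, together with a careful accounting of the leftmost $\min(\g, k)$ coordinates (which are essentially unconstrained by the partial-sum conditions, contributing the factor $\g+1$), yields the polynomial factor $Y = (\g+5-b)^2(\g+1)$ along with the $(k+1)!$ in the denominator.

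The main obstacle is the case $b \ge \g + 5$. Here the constraint $\sum_{i=1}^k 2^{v\xi_i} \ge 2^{k-\g}$, combined with the uniform upper bound $2^{v\xi_i} \le 2^v$, forces roughly $2^{b-\g}$ of the top $\xi_i$ to cluster within distance $O(1/v)$ of $1$. The volume of this clustering event, relative to the ambient ordered simplex of volume $1/k!$, contributes the doubly-exponential factor $2^{-2^{b-\g}}$. Quantitatively, I would isolate the top $m := \lceil 2^{b-\g} \rceil$ coordinates, bound their contribution directly by the measure of a thin slab near $\xi = 1$, and apply the inductive hypothesis to the remaining $k - m$ coordinates, which live in a $\TT(k-m, v, \g')$-type region. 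The technical challenge lies in verifying that this clustering constraint propagates cleanly through the induction so that the $2^{-2^{b-\g}}$ factor multiplies the correct combinatorial denominator $(k+1)!$, rather than a slightly weaker denominator produced by naively applying the inductive bound to the thin-slab configurations.
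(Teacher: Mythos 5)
This lemma is not proved in the paper; it is quoted from \cite[Lemma 13.2]{F} and \cite[Lemma 4.4]{F2}, so there is no in-paper proof to compare against. Assessed on its own terms, your sketch identifies the right heuristic but has a genuine gap in the case $b\ge\g+5$, which you yourself flag as unresolved. The issue is more than bookkeeping: the statement ``the constraint forces roughly $2^{b-\g}$ of the top $\xi_i$ to cluster within distance $O(1/v)$ of $1$'' is not correct in general. Writing $m$ for the number of indices with $\xi_i\ge 1-c/v$, the $j=k$ constraint $\sum 2^{v\xi_i}\ge 2^{k-\g}=2^v\cdot 2^{b-\g}$ together with $2^{v\xi_i}\le 2^v$ only gives $m(1-2^{-c})\ge 2^{b-\g}-k2^{-c}$, so to force $m\gtrsim 2^{b-\g}$ one must take $c\gtrsim \log_2\!\bigl(k/2^{b-\g}\bigr)$, which can be as large as $\log_2 k$, not $O(1)$. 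Plugging this $c$ into the thin-slab bound $\binom{k+1}{m}(c/v)^m$ with $m\asymp 2^{b-\g}$ gives $\bigl(O(c/m)\bigr)^m$, and when $b-\g$ is close to $5$ while $v$ (hence $k\le 10v$) is large, $c/m\asymp(\log_2 k)/2^{b-\g}$ is $\gg 1$, so this expression is \emph{larger} than $1$ and certainly not $\ll 2^{-2^{b-\g}}$. In fact the ``clustering event'' has probability essentially $1$ in that regime (the top $O(1)$ order statistics of $k$ ordered points in $[0,1]$ are already within $O(1/k)\le O(1/v)$ of $1$), so conditioning on it yields nothing. The doubly-exponential saving in the small-$(b-\g)$ range must instead come from using the full chain of partial-sum inequalities $R_j\ge 2^{j-\g}$ for a long range of $j$, not just the single constraint at $j=k$.

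A related imprecision earlier in the sketch: the partial-sum conditions are not ``equivalent'' to the one-sided bounds $\eta_j\gtrsim -\,(\log_2 j)/v$ obtained from $R_j\le j\,2^{v\xi_j}$; this is a strict weakening (fine for an upper bound, but it discards exactly the joint information needed for the $2^{-2^{b-\g}}$ factor). And in the case $b<\g+5$, the proposed conditioning on $\xi_k$ is underspecified: dropping the $j=k$ constraint and integrating $\xi_k$ over $[\xi_{k-1},1]$ leaves a $\TT(k-1,v,\g)$-type region intersected with $\{\xi_{k-1}\le\xi_k\}$, and you do not say what the adjusted parameter $\g'$ is or how the factor $(\g+5-b)^2(\g+1)$ emerges from the convolution; that factor is not obtained by a single conditioning step. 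So while the intuition about where $2^{-2^{b-\g}}$ comes from is right in the extreme regime $b-\g$ close to $\log_2 k$, the plan as written does not establish the stated bound uniformly.
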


\begin{lem}\label{Unlem}
Suppose $k,u,v$ are integers satisfying $1\le k\le 10v$ and $u\ge 1$.  Then
$$
U_k(v,u) \ll \frac{u(1 + |k-v-u|^2)} {(k+1)! (2^{k-v-u}+1)}.
$$
\end{lem}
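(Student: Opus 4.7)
The plan is to bound the integrand by a dyadic layer-cake decomposition and identify each super-level set as a subset of one of the sets $\TT(k,v,\g)$ controlled by Lemma \ref{UUlem}.

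Write $S_j = 2^{v\xi_1} + \cdots + 2^{v\xi_j}$ (with $S_0 = 0$) and set
\[
f(\bx) = \min_{0 \le j \le k} 2^{-j}\bigl(2^u S_j + 1\bigr),
\]
so $U_k(v,u)$ equals the integral of $f$ over the simplex $\{0 \le \xi_1 \le \cdots \le \xi_k \le 1\}$. The $j=0$ term forces $f \le 1$, so a layer-cake formula and the dyadic decomposition $(0,1]=\bigsqcup_{\g\ge 0}(2^{-\g-1},2^{-\g}]$ give
\[
U_k(v,u) = \int_0^1 \Vol(\{f > t\})\,dt \le \sum_{\g \ge 0} 2^{-\g-1}\, V(\g+1), \qquad V(\g) := \Vol(\{f \ge 2^{-\g}\}).
\]

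The key step is the inclusion $V(\g) \le \Vol(\TT(k, v, \g + u + 1))$. Indeed, $f(\bx) \ge 2^{-\g}$ forces $2^u S_j + 1 \ge 2^{j-\g}$ for every $j$. For $j \le \g$ this is automatic; for $j > \g$, one has $2^{j-\g}-1 \ge 2^{j-\g-1}$, so the inequality gives $S_j \ge 2^{j-(\g+u+1)}$. Since $\xi_1 \ge 0$ forces $S_j \ge 1 \ge 2^{j-(\g+u+1)}$ for $1 \le j \le \g + u + 1$, the constraint $S_j \ge 2^{j-(\g+u+1)}$ actually holds for all $1 \le j \le k$, which is the defining condition of $\TT(k, v, \g + u + 1)$.

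Substituting $n = \g + u + 2$ and invoking Lemma \ref{UUlem} with $b = k - v$, I arrive at
\[
U_k(v,u) \ll \frac{2^{u+1}}{(k+1)!} \sum_{n \ge u+2} \frac{Y(n)}{2^n \cdot 2^{2^{b-n}}},
\]
where $Y(n) = b$ when $b \ge n + 5$ and $Y(n) = (n+5-b)^2(n+1)$ otherwise. What remains is a dyadic tail estimate: when $n < b - O(1)$ the double-exponential factor $2^{2^{b-n}}$ renders the contribution negligible, while for $n \ge b$ one has $Y(n) \ll (n-b+1)^2(n+1)$ and the sum is controlled by the terms with $n$ near $\max(b, u+2)$. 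Splitting into the regimes $b \ge u + 2$ and $b < u + 2$ and estimating termwise then yields the target bound $u(1+|b-u|^2)/((k+1)!(2^{b-u}+1))$. The only real obstacle is the careful bookkeeping across these regimes; no new ideas beyond Lemma \ref{UUlem} enter.
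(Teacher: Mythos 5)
Your proposal is correct and follows essentially the same route as the paper: decompose dyadically by the size of the integrand, show the resulting super-level set is contained in $\TT(k,v,\g+u+1)$ (the paper bounds $2^{-j}(2^uS_j+1)\ge 2^{-m}$ directly via $\max(2^{-j},2^{-m-u}-2^{-j-u})\ge 2^{-m-u-1}$, while you split cases $j\le\g+u+1$ and $j>\g$, but the inclusion is identical), and then sum the resulting bounds from Lemma~\ref{UUlem} over the dyadic scale. The regime split and termwise estimation you sketch at the end (dominant term near $n\approx\max(b,u)$, double-exponential killing small $n$) is exactly the paper's final step, modulo harmless $\pm O(1)$ shifts in the cutoffs.
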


Notice that the bound in Lemma \ref{Unlem} undergoes a change of behavior
at $k = v+u$.  

\begin{proof}
Put $b=k-v$.
For integers $m\ge 0$,  consider $\bx$ satisfying
\[
2^{-m} \le\min_{0\le j\le k} 2^{-j}\( 2^{v\xi_1+u}+\cdots+2^{v\xi_j+u}+1\)  < 2^{1-m}.
\]
 For $1\le j\le k$ we have
\[
2^{-j} \( 2^{v\xi_1+u}+\cdots+2^{v\xi_j+u} \) \ge
\max(2^{-j}, 2^{-m-u}-2^{-j-u}) \ge 2^{-m-u-1},
\]
and thus $\bx \in \TT(k,v,m+u+1)$.  Invoking Lemma \ref{UUlem}, we find that
\begin{align*}
U_k(v,u)  &\le \sum_{m\ge 0} 2^{1-m} \Vol(\TT(k,v,m+u+1))
\ll \frac{1}{(k+1)!} \sum_{m\ge 0} \frac{2^{-m} Y_m}
  {2^{2^{b-m-u-1}}}, \\
Y_m &= \begin{cases} b & \text{ if } m+u \le b-6 \\ (m+u+6-b)^2(m+u+2) & 
\text{ if } m+u > b-6 \end{cases}.
\end{align*}
Dividing the sum according to the two cases yields
$$
\sum_{m\ge 0} \frac{2^{-m} Y_m}{2^{2^{b-u-m-1}}} \ll \!\!\!
\sum_{0\le m< b-u-5} \frac{b}{2^m 2^{2^{b-m-u-1}}} + \!\! \sum_{m\ge 
\max(0,b-u-5)}\!\! \frac{(m+u+6-b)^2(m+u+2)}{2^m}.
$$
The proof is completed by noting that
if $b\ge 6+u$, each sum on the right side is $\ll b 2^{u-b}$ and
if $b\le 5+u$, the first sum is empty and the second is 
$\ll (6+u-b)^2 \ll 1+(b-u)^2$.
\end{proof}

Finally, we complete the upper bound in Theorem \ref{mainthm}.
Let $v=\flr{\frac{\log_2 y-\log_2 w}{\log 2}}$,
$u=\flr{\frac{\log_2 w}{\log 2}}$ and define $k_0$ by \eqref{k0}. 
Note that $k_0=v+u+O(1)$. We now combine Lemmas \ref{Tnint}
and \ref{Unlem}.  Since $k_0 > 1.4 (\log_2 y-\log_2 w)$, we have
\dalign{
\sum_{k_0 \le k\le 10k_0} T_k &\ll  \sum_{k_0 \le k\le 10k_0} 
\frac{u(1+(k-k_0)^2)}{(k+1)! 2^{k-u-v}} (2\log_2 y-2\log_2 w)^k \\
&\ll u 2^{k_0} \sum_{\ell\ge 0} \frac{1+\ell^2}{(k_0+1+\ell)!}
(\log_2 y-\log_2 w)^{k_0+\ell} \\
&\ll (\log_2 w) \frac{(2\log_2 y-2\log_2 w)^{k_0}}{(k_0+1)!}.
}
Similarly, since $k_0 \le 0.9(2\log_2 y-2\log_2 w)$, we have
\dalign{
\sum_{0\le k < k_0} T_k &\ll 1+\sum_{1\le k < k_0}
\frac{u (k_0-k)^2 (2\log_2 y-2\log_2 w)^k}{(k+1)!} \\
&\ll 1+ u \sum_{\ell=1}^{k_0-1} \frac{u \ell^2 (2\log_2 y-2\log_2 w)^{k_0-\ell}}{(k_0+1-\ell)!} \\
&\ll  (\log_2 w) \frac{(2\log_2 y-2\log_2 w)^{k_0}}{(k_0+1)!}.
}
For the large values of $k$ we use the crude bound
$L(a) \ll \tau(a)$ from  Lemma \ref{Lbounds} (i), followed by an
application of Lemma \ref{suma} (a).  This gives
\begin{align*}
\sum_{k\ge 10k_0} T_k &\le \sum_{k\ge 10k_0} \sum_{\substack{a\in\PP(w,y)\\ \omega(a)=k}} \frac{2^k \log 2}{a}
\le \sum_{k\ge 10k_0} \frac{(2\log_2 y-2\log_2 w+O(1))^k}{k!} \\
&\ll \frac{(2\log_2 y-2\log_2 w+O(1))^{10 k_0}}{(10k_0)!} \\
&\ll \frac{(2\log_2 y-2\log_2 w)^{k_0}}{(k_0+1)!}.
\end{align*}
Combining these three bounds for sums of $T_k$ with Lemma \ref{HL}, Lemma \ref{Norton}, and Stirling's formula, we conclude that
\dalign{
H(x,y,2y;\cR_w) &\ll \frac{x}{\log^2 y}(\log_2 w)  \frac{(2\log_2 y-2\log_2 w)^{k_0}}{(k_0+1)!} \\
&\ll \frac{x\log_2 w}{(\log_2 y)^{3/2}} (\log y)^{-\cE + \frac{\log(1-\delta)}{\log 2}}.
}
The proof of the upper bound in Theorem \ref{mainthm} is complete.

%
%

\bibliographystyle{amsplain}
\bibliography{h-rough}

\providecommand{\bysame}{\leavevmode\hbox to3em{\hrulefill}\thinspace}
\providecommand{\MR}{\relax\ifhmode\unskip\space\fi MR }
\providecommand{\MRhref}[2]{%
  \href{http://www.ams.org/mathscinet-getitem?mr=#1}{#2}
}
\providecommand{\href}[2]{#2}
\begin{thebibliography}{10}

\bibitem{ES90}
P.~Erd\H{o}s and A.~Schinzel, \emph{On the greatest prime factor of
  {$\prod^x_{k=1}f(k)$}}, Acta Arith. \textbf{55} (1990), no.~2, 191--200.
  \MR{1061638}

\bibitem{Erdos52}
P.~Erd{\H{o}}s, \emph{On the greatest prime factor of {$\prod^x_{k=1}f(k)$}},
  J. London Math. Soc. \textbf{27} (1952), 379--384. \MR{0047686}

\bibitem{Erdos55}
\bysame, \emph{Some remarks on number theory}, Riveon Lematematika \textbf{9}
  (1955), 45--48, (Hebrew. English summary). \MR{17,460d}

\bibitem{Erdos60}
\bysame, \emph{{A}n asymptotic inequality in the theory of numbers}, Vestnik
  Leningrad. Univ. \textbf{15} (1960), no.~13, 41--49, (Russian). \MR{23
  \#A3720}

\bibitem{F}
K.~Ford, \emph{The distribution of integers with a divisor in a given
  interval}, Ann. Math. (2008), 367--433.

\bibitem{F2}
\bysame, \emph{Integers with a divisor in $(y,2y]$}, Anatomy of Integers, CRM
  Proceedings and Lecture Notes (2008), 65--80, Proof simplified in
  \texttt{arXiv: math/0607473}.

\bibitem{FKSY}
K.~Ford, M.~R. Khan, I.~E. Shparlinski, and C.~L. Yankov, \emph{On the maximal
  difference between an element and its inverse in residue rings}, Proc. Amer.
  Math. Soc. \textbf{133} (2005), no.~12, 3463--3468. \MR{2163580}

\bibitem{HR}
H.~Halberstam and H.-E. Richert, \emph{Sieve methods}, Academic Press, 1974,
  London Mathematical Society Monographs, No. 4. \MR{54 \#12689}

\bibitem{KouP}
D.~Koukoulopoulos, \emph{Divisors of shifted primes}, Int. Math. Res. Not. IMRN
  \textbf{2010} (2010), no.~24, 4585--4627.

\bibitem{DK}
\bysame, \emph{Localized factorizations of integers}, Proc. London Math. Soc.
  \textbf{101} (2010), 392--426.

\bibitem{Norton}
K.~K. Norton, \emph{On the number of restricted prime factors of an integer.
  {I}}, Illinois J. Math. \textbf{20} (1976), no.~4, 681--705. \MR{54 \#7403}

\bibitem{T90a}
G.~Tenenbaum, \emph{Sur une question d'{E}rd{\H{o}}s et {S}chinzel}, A tribute
  to {P}aul {E}rd\H{o}s, Cambridge Univ. Press, Cambridge, 1990, pp.~405--443.
  \MR{1117034}

\bibitem{T90b}
\bysame, \emph{Sur une question d'{E}rd{\H{o}}s et {S}chinzel. {II}}, Invent.
  Math. \textbf{99} (1990), no.~1, 215--224. \MR{1029397}

\end{thebibliography}

\end{document}